\newtheorem{theorem}{Theorem}[section]
\newtheorem{lemma}[theorem]{Lemma}
\newtheorem*{lemma*}{Lemma}
\numberwithin{equation}{section}
\theoremstyle{definition}
\newtheorem{definition}[theorem]{Definition}
\theoremstyle{remark}
\numberwithin{equation}{section}
\newcommand{\abs}[1]{\lvert#1\rvert}
\newcommand{\C}{\mathbb{C}}
\newcommand{\D}{\partial}
\newcommand{\W}{\mathscr{W}}
\newcommand{\X}{\mathbb{X}}
\newcommand{\U}{\mathbb{U}}
\newcommand{\V}{\mathbb{V}}
\newcommand{\dd}{\mathbb{D}}
\newcommand{\Y}{\mathbb{Y}}
\newcommand{\dtext}{\textnormal d}
\newcommand{\onto}{\xrightarrow[]{{}_{\!\!\textnormal{onto\,\,}\!\!}}}
\newcommand{\bydef}{\stackrel {\textnormal{def}}{=\!\!=} }
\newcommand{\quasi}{\stackrel {\textnormal{quasi}}{=\!\!=\!\!=} }
\DeclareMathOperator{\diam}{diam}
\DeclareMathOperator{\im}{Im}
\DeclareMathOperator{\loc}{loc}
\def\le{\leqslant}
\def\ge{\geqslant}
\def\dd{{\mathbb D}}
\def\cl{{\mathscr L}}
\def\fz{\infty}
\def\loc{{\mathop\mathrm{\,loc\,}}}
\def\ez{\epsilon}
\def\bint{{\ifinner\rlap{\bf\kern.25em--}
\int\else\rlap{\bf\kern.45em--}\int\fi}\ignorespaces}
\def\bbint{{\ifinner\rlap{\bf\kern.25em--}
\hspace{0.078cm}\int\else\rlap{\bf\kern.45em--}\int\fi}\ignorespaces}
\def\diam{{\mathop\mathrm{\,diam\,}}}
\def\r{\right}
\def\lf{\left}
\begin{document}
\title{Singularities in $\mathscr L^{p}$-quasidisks}
\author[T. Iwaniec]{Tadeusz Iwaniec}
\address{Department of Mathematics, Syracuse University, Syracuse,
NY 13244, USA}
\email{tiwaniec@syr.edu}

\author[J. Onninen]{Jani Onninen}
\address{Department of Mathematics, Syracuse University, Syracuse,
NY 13244, USA and Department of Mathematics and Statistics, P.O.Box 35 (MaD) FI-40014 University of Jyv\"askyl\"a, Finland}
\email{jkonnine@syr.edu}

\author[Z. Zhu]{Zheng Zhu}
\address{Department of Mathematics and Statistics, P.O.Box 35 (MaD) FI-40014 University of Jyv\"askyl\"a, Finland}
\email{zheng.z.zhu@jyu.fi}

\thanks{ T. Iwaniec was supported by the NSF grant DMS-1802107.
J. Onninen was supported by the NSF grant DMS-1700274.}

\subjclass[2010]{Primary 30C60; Secondary  30C62}


\keywords{Cusp, mappings of integrable distortion, quasiconformal, quasidisc.}

\maketitle

\begin{abstract}  We study planar domains with  exemplary boundary singularities   of the form of cusps.  A natural question is how much elastic energy is needed to flatten these cusps; that is,  to remove  singularities. We give, in  a   connection of quasidisks,  a sharp integrability condition for the distortion function to answer  this question.
\end{abstract}

\section{Introduction and Overview} The subject matter emerge most clearly when the setting is more general than we actually present it here. Thus we suggest, as a possibility, to consider two planar sets $\,\mathbb X, \mathbb Y\, \subset \mathbb C\,$ of the same global topological configuration, meaning that there is a sense preserving homeomorphism $\, f : \mathbb C \onto \mathbb C\,$ which takes $\,\mathbb X\,$ onto $\mathbb Y\,$. Clearly $\,f : \mathbb C\setminus \mathbb X \onto \mathbb C \setminus \mathbb Y\,$. We choose two examples; one from naturally occurring  Geometric Function Theory (GFT) and the other from mathematical models of Nonlinear Elasticity (NE).  The first one deals with \textit{quasiconformal mappings} $\, f : \mathbb C \onto \mathbb C\,$ and the associated concept of a \textit{quasidisk}, whereas the unexplored perspectives come from NE.
From these perspectives we look at the ambient space $\,\mathbb C\,$  as made of a material whose elastic properties are characterized by a \textit{stored energy function} $\, E : \mathbb C \times \mathbb C \times \mathbb R^{2\times 2} \, \rightarrow \mathbb R\,$,  and $\, f : \mathbb C \onto \mathbb C\,$ as a deformation of finite energy,
\begin{equation}
\mathbf E[f]  \, \bydef \, \int_\mathbb C  E(z, f, Df)\,\mathbf d z  < \infty\,.
\end{equation}
Hereafter the differential matrix $\,Df(z) \in \mathbb R^{2 \times 2}\,$ is referred to as \textit{deformation gradient}. A Sobolev homeomorphism $\, f : \mathbb C \onto \mathbb C\,$  of finite energy is understood as a \textit{hyper-elastic deformation} of $\,\mathbb C\,$. Our  concept of finite energy,  suited to the purpose of the present paper,  is clearly inspired by \textit{mappings of finite distortion}~\cite{AIMb, HK, IMb}, including \textit{quasiconformal mappings}. Therefore, omitting necessary details, the stored energy function will take the form $\,E(z, f, Df) = E(z, |Df|^2/ \textnormal{det} Df)\,$.  We adopt interpretations from NE where a great part of our paper is highly motivated. Let us take a quick look at such mappings.

\subsection{Mappings of finite distortion} Throughout this paper the domain of definition of such mappings consists of sense preserving homeomorphisms $\,f : \mathbb C \onto \mathbb C\,$ of Sobolev class $\,\mathscr W^{1,1}_{\textnormal {loc}} (\mathbb C , \mathbb C)\,$.
\begin{definition} A homeomorphism   $\,f \in \mathscr W^{1,1}_{\textnormal {loc}} (\mathbb C , \mathbb C)\,$  is said to have {\it finite distortion} if there is a measurable function  $\,K : \mathbb C \rightarrow [1, \infty)\,$ such that
\begin{equation}\label{eq:dist}
|Df(z)|^2 \le K(z) {J_f(z)}\;,\;\;\; \textnormal{for almost every\,} \,z \in \mathbb C .
\end{equation}
\end{definition}

Hereafter $\,|Df(z)|\,$ stands for the operator norm of the differential matrix  $\,Df(z) \in \mathbb R^{2 \times 2}\,$,  and ${J_f(z)}$ for its determinant. The smallest function $K(x) \geqslant 1$ for which~\eqref{eq:dist} holds is called the {\it  distortion} of $f$, denoted by $K_f=K_f(x)$.    In terms of d'Alembert complex derivatives, we have  $\,|Df(z)| = |f_z| + |f_{\bar z}|\,$ and $\,J_f(z) = |f_z|^2  - |f_{\bar{z} }|^2\,$.   Thus $\,f\,$ can be viewed as  a \textit{very weak solution} to the \textit{Beltrami equation}:
\begin{equation}\label{BeltramiEquation}
\frac{\partial f}{\partial \bar z} =  \; \mu (z)\, \frac{\partial f}{\partial z}\,\,,  \;\;\;\textnormal{where}\;  \abs{\mu(z)} = \frac{K_f(z) -1}{K_f(z) +1} \, < 1
\end{equation}

\begin{figure}[htbp]
\centering
\includegraphics[width=1.00\textwidth]
{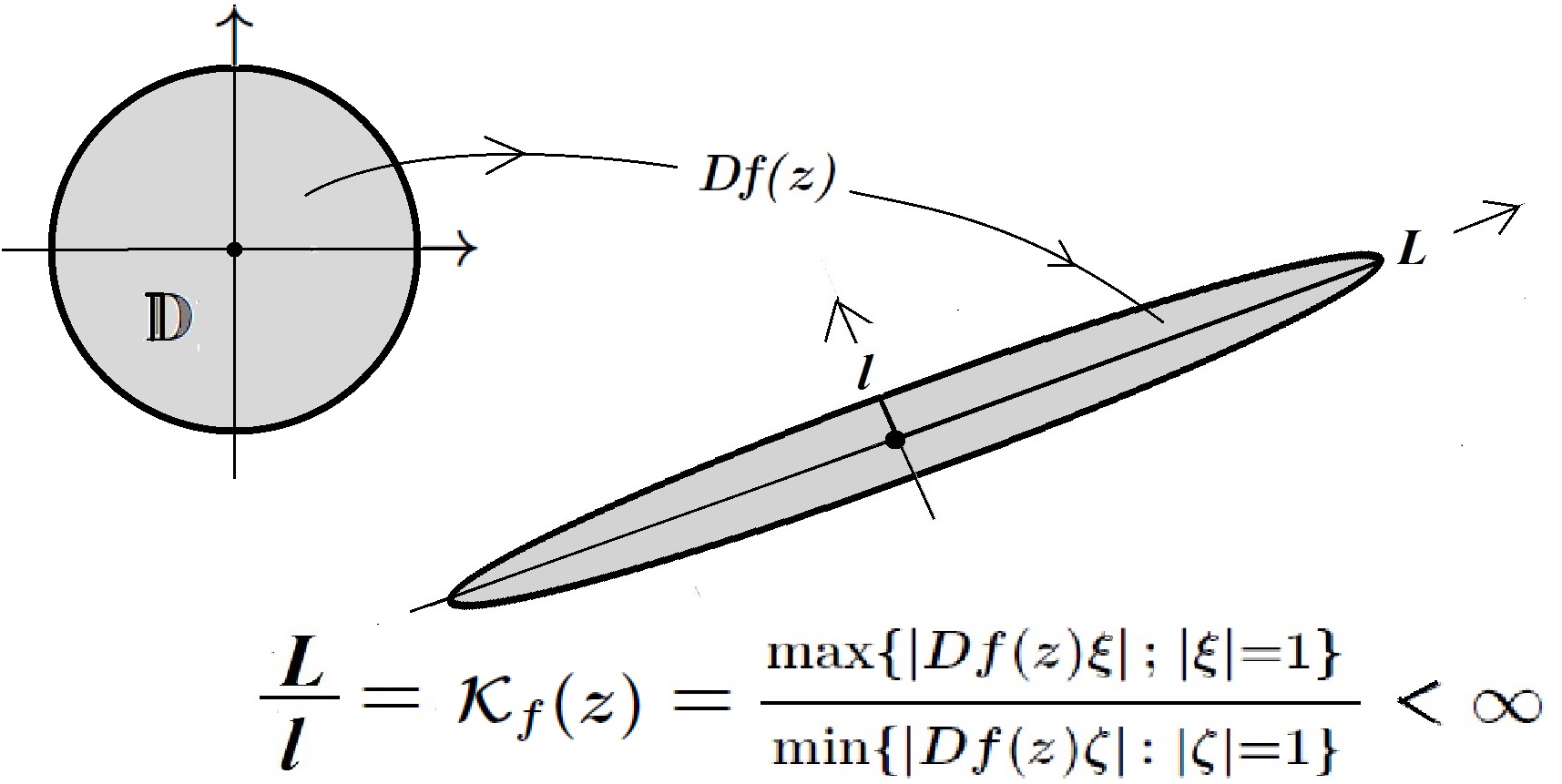}
\caption{The ratio $\,L/l\,$, which measures the infinitesimal distortion of the material structure at the point $\,z\,$,  is allowed to be arbitrarily large. Nevertheless, $\,L/l\,$  has to be finite almost everywhere.}
\label{fig:distortion}
\end{figure}

The distortion inequality~\eqref{eq:dist}  asks that  $\,Df(z) = 0 \in \mathbb R^{2\times 2}\,$ at the points  where  the Jacobian $\, J_f(z)  = \det Df(z)\,$ vanishes.
\begin{definition} A homeomorphism $\,f : \mathbb C \onto \mathbb C\,$ of Sobolev class $\,\mathscr W^{1,1}_{\textnormal {loc}} (\mathbb C , \mathbb C)\,$  is said to be {\it quasiconformal} if $\,K_f \in {\mathscr L}^\infty({\mathbb C})\,$. It is $\,K\,$-\textit{quasiconformal} $( 1 \leqslant K < \infty)\,$ if $\,1\leqslant \, K_f(z) \leqslant K\,$ everywhere.
\end{definition}
\subsection{Quasi-equivalence}
 It should be pointed out that the inverse map $\,f^{-1} : \mathbb C \onto \mathbb C\,$ is also $\,K\,$-quasiconformal and a composition $\,f \circ g\,$ of $\,K_1\,$ and $\,K_2\,$-quasiconformal mappings is $\,K_1\cdot K_2\,$-quasiconformal. \\
 These special features of quasiconformal mappings furnish an equivalence relation between subsets of  $\, \mathbb C\,$ that is reflexive, symmetric and transitive.
 \begin{definition} We say that $\,\mathbb X \subset \mathbb C\,$ is \textit{quasi-equivalent} to  $\,\mathbb Y \subset \mathbb C\,$, and write $\,\mathbb X\quasi \mathbb Y\,$,  if $\, Y = f(\mathbb X) \,$ for some quasiconformal mapping  $\,f : \mathbb C \onto \mathbb C\,$.
 \end{definition}

\subsection{Quasidisks}  One exclusive class of quasi-equivalent subsets is represented by the open unit disk $\,\mathbb D \subset \mathbb C\,$. Thus we introduce the following:
\begin{definition}
A domain  $\,\mathbb X \subset \mathbb C\,$ is called \textit{quasidisk} if it admits a quasiconformal mapping $\, f :\mathbb C \onto \mathbb C\,$  which takes $\,\mathbb X\,$ onto  $\,\mathbb D\,$. In symbols, we have $\,\mathbb X\quasi \mathbb D\,$.
\end{definition}

Quasidisks have been studied intensively for many years because of their exceptional functional theoretical properties, relationships with Teichm\"uller theory and Kleinian groups and interesting applications in complex dynamics, see~\cite{GeMo} for an elegant  survey. Perhaps the best know geometric characterization for a quasidisk is the  {\it Ahlfors' condition}~\cite{Ahre}.
\begin{theorem}[Ahlfors] Let $\X$ be a  (simply connected) Jordan domain in the plane. Then $\X$ is a quasidisk if and only if  there is a constant $\,1 \leqslant \gamma\ <\infty ,$ such that for each pair of distinct points $a, b \in \partial \X$ we have
\begin{equation}\label{eq:ah} \diam \Gamma   \le \gamma \, \abs{a-b}\end{equation}
where $\,\Gamma\,$ is the component of $\partial \X \setminus \{a, b\}\,$ with smallest diameter.
\end{theorem}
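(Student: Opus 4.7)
The plan is to prove the two implications separately, with the necessity being routine and the sufficiency being the substantial half. For \textbf{necessity}, suppose $\X\quasi \DD$, so that some quasiconformal $f:\C\onto\C$ sends $\partial\X$ onto the unit circle $\partial\DD$. I would invoke the global quasisymmetry of planar quasiconformal mappings (the Mori--Ahlfors--Tukia--V\"ais\"al\"a theorem): there is an increasing homeomorphism $\eta:[0,\infty)\to[0,\infty)$ depending only on $K_f$ such that $|f(z_1)-f(z_2)|\le \eta(t)\,|f(z_1)-f(z_3)|$ whenever $|z_1-z_2|\le t\,|z_1-z_3|$. Given $a,b\in\partial\X$ and the arc $\Gamma$ of smaller diameter, the images $f(a), f(b)$ lie on $\partial\DD$ and $f(\Gamma)$ is one of the two arcs they cut out. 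A direct computation on $\partial\DD$ shows that for any two points on a circle, the shorter subarc has diameter comparable to the chord length. Pulling this estimate back with $\eta$ (applied to $f^{-1}$, which is also $K_f$-quasiconformal) yields the desired $\diam\Gamma\le \gamma|a-b|$ with $\gamma$ depending only on $K_f$.

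For \textbf{sufficiency}, assume \eqref{eq:ah}. The strategy is to manufacture a quasiconformal $f:\C\onto\C$ with $f(\X)=\DD$ by producing a quasiconformal self-map of $\C$ in two halves, interior and exterior, that agree on $\partial\X$. First, apply the Riemann mapping theorem to obtain a conformal $\varphi:\DD\to\X$; by Carath\'eodory's theorem the three-point condition (which forces $\partial\X$ to be a Jordan curve without cusps or slits) lets $\varphi$ extend to a homeomorphism $\overline{\DD}\to\overline{\X}$. The first analytic step is to show that the boundary map $\varphi|_{\partial\DD}:\partial\DD\to\partial\X$ is quasisymmetric, with quasisymmetry control depending only on $\gamma$. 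This is done by combining \eqref{eq:ah} with classical conformal distortion estimates (harmonic measure / extremal length): the three-point condition translates into a doubling-type bound for $\varphi$ on boundary arcs, which is precisely the quasisymmetry inequality.

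Once the boundary map is quasisymmetric, the second step is to extend it to a quasiconformal homeomorphism. On the disk side I would use a Beurling--Ahlfors type extension (transferred from the half-plane via a M\"obius change of coordinates) to obtain a quasiconformal $F:\overline{\DD}\to\overline{\X}$ agreeing with $\varphi$ on $\partial\DD$. For the exterior, the same three-point condition applies symmetrically to $\C\setminus\overline{\X}$, so the Riemann map of the exterior component likewise induces a quasisymmetric boundary correspondence with the exterior of $\DD$, which extends to a quasiconformal map of the exterior. Gluing the two extensions along $\partial\X$ (which is a removable set for quasiconformality by a standard measurable Riemann mapping argument, since $\partial\X$ has Lebesgue measure zero and the glued map lies in $\W^{1,2}_{\loc}$) produces the desired quasiconformal $f:\C\onto\C$.

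The \textbf{main obstacle} is the second step of the sufficiency argument: promoting the geometric three-point condition on $\partial\X$ into a genuine quasisymmetric bound for the conformal welding $\varphi|_{\partial\DD}$. This is where the asymmetry of the hypothesis (we only control the \emph{smaller} of the two arcs) must be exploited, and where conformal distortion estimates must be invoked carefully. Everything downstream---Beurling--Ahlfors extension, reflection across $\partial\X$, and the gluing---is comparatively mechanical once a quasisymmetric boundary correspondence is in hand.
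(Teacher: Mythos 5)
The paper does not prove this statement at all: it is Ahlfors' classical characterization, quoted with a citation to \cite{Ahre}, and Ahlfors' original argument proceeds by constructing a quasiconformal \emph{reflection} across the curve directly from \eqref{eq:ah} (the point of view the paper echoes in Section~\ref{sec:ref} via K\"uhnau's theorem). Your outline follows the other standard route, through Riemann maps and a quasisymmetric boundary correspondence extended by Beurling--Ahlfors. Your necessity argument is essentially correct: global planar quasiconformal maps are $\eta$-quasisymmetric, and although $f(\Gamma)$ may be the \emph{major} arc of $\partial\DD$, one simply bounds the diameter of the preimage of the minor arc by $\eta(1)\abs{a-b}$ and uses that $\Gamma$ has the smallest diameter of the two components.

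The sufficiency half, however, has genuine gaps. First, the step you yourself flag as the main obstacle --- converting \eqref{eq:ah} into a quasisymmetry estimate for the boundary correspondence via ``harmonic measure / extremal length'' --- is the entire analytic content of this direction; it is asserted, not carried out, and the asymmetry of the hypothesis (only the smaller arc is controlled) is exactly what those modulus estimates must overcome. Second, the gluing scheme is set up incorrectly: the interior Riemann map $\varphi:\DD\to\X$ and the exterior Riemann map $\psi:\C\setminus\overline{\DD}\to\C\setminus\overline{\X}$ do \emph{not} agree on the boundary, so the object to be extended by Beurling--Ahlfors is the circle welding $\psi^{-1}\circ\varphi:\partial\DD\to\partial\DD$, after which one replaces $\psi$ by $\psi\circ\widetilde{w}$ to match $\varphi$ along $\partial\DD$; there is nothing to extend ``on the disk side'' where $\varphi$ is already conformal, and extending a quasisymmetric \emph{embedding} $\partial\DD\to\partial\X\subset\C$ to a quasiconformal map of the plane is essentially the quasicircle theorem you are trying to prove, so it cannot be invoked. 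Third, your removability step glues along $\partial\X$ and assumes that this curve has zero area and that the glued homeomorphism lies in $\W^{1,2}_{\loc}$ across it; neither is known at that stage (a Jordan curve can have positive area, and zero area for $\partial\X$ is only a consequence of the theorem). Gluing along the unit circle instead --- conformal inside, $\psi\circ\widetilde{w}$ outside --- uses only the classical removability of circles, and inverting the resulting map finishes the proof.
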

\newpage

\begin{figure}[htbp]
\centering
\includegraphics[width=1.00\textwidth]
{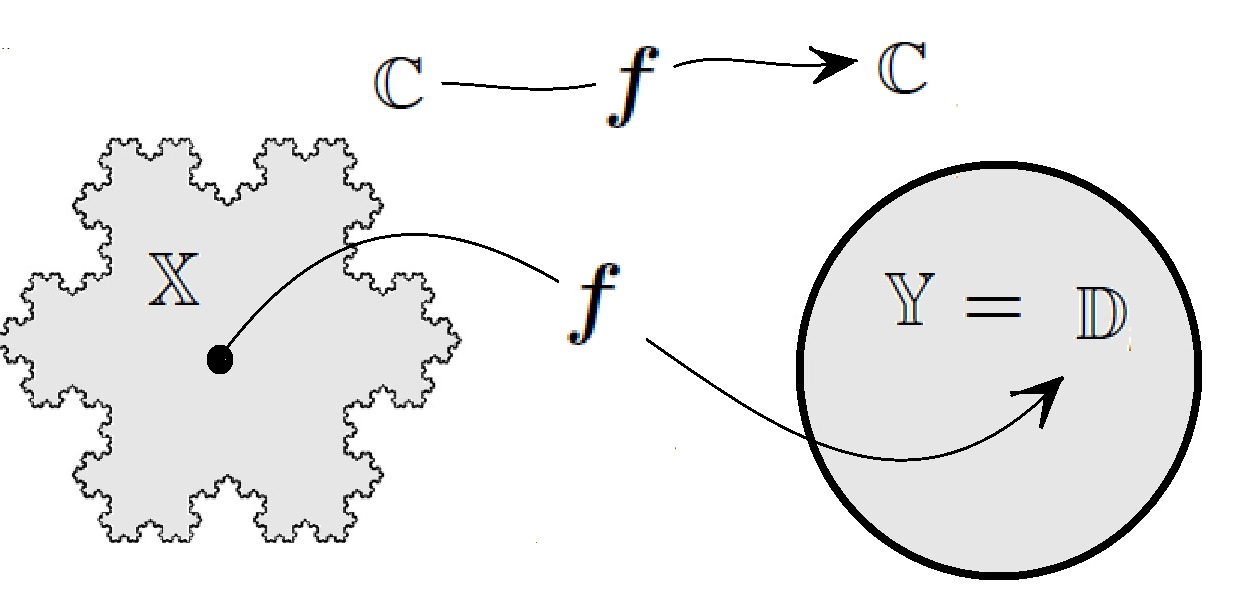}
\caption{ Koch snowflake reveals complexity of a quasidisk.}
\label{fig:snowflake}
\end{figure}
One should infer from the Ahlfors' condition~\eqref{eq:ah} that: \\
\begin{center}

\framebox{\textbf{Quasidisks do not allow for cusps in the boundary.}}

\end{center}

$\;$\\
That is to say, unfortunately, the point-wise inequality  $\,K_f(z) \leqslant K < \infty\,$ precludes $\,f\,$ from smoothing even basic singularities. It is therefore of interest to look for more general deformations $\,f : \mathbb C \onto \mathbb C\,$. We shall see, and it will become intuitively clear, that the act of deviating from conformality should be measured by integral-mean distortions rather than point-wise distortions.
More general  class of mappings, for which one might hope to build  a viable theory, consists of homeomorphisms  with locally $\,\mathscr L^{\,p}\,$-integrable distortion, $\,1 \leqslant p < \infty\,$.
\begin{definition}
The term {\it mapping of $\,\mathscr L^{\,p}\,$-distortion}, $\,1 \le p < \infty$, refers to a  homeomorphism $f \colon \mathbb C \to \mathbb C$ of class $\W_{\loc}^{1,1} (\C, \C)$ with  $K_f \in \mathscr L_{\loc}^{\, p} (\C)$. 
\end{definition}

Now, we generalize the notion of quasidisks; simply, replacing the assumption  $\,K_f \in {\mathscr L}^\infty({\mathbb C})\,$ by  $\,K_f \in {\mathscr L}_{\loc}^{\, p}({\mathbb C})\,$.
\begin{definition} A domain $\,\mathbb X\subset \mathbb C\,$ is called  an {\it $\mathscr L^{\, p}$-quasidisk} if it admits a homeomorphism  $f \colon \C \to \C$ of  $\,\mathscr L^{\,p}\,$-distortion such that $\,f(\mathbb X) = \mathbb D\,$.
\end{definition}
Clearly, $\mathscr L^p$-quasidisks are Jordan domains.
Surprisingly, the $\mathscr L_{\loc}^1$-integrability of the distortion  seems not to cause any geometric constraint on $\,\mathbb X\,$. We confirm this observation for domains with rectifiable boundary.
\begin{theorem}\label{theorem2}
  Simply-connected Jordan domains  with rectifiable boundary are $\cl^{\, 1}$-quasidisks.
\end{theorem}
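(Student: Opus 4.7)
The plan is to build $f:\C\to\C$ piece by piece, using the interior Riemann map on $\X$, the exterior Riemann map far from $\partial\X$, and an interpolating collar on a thin annular neighborhood of $\partial\X$. Rectifiability of $\partial\X$ enters decisively only in the integrability estimate for the collar, via the classical F.\ and M.\ Riesz theorem.

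\textbf{Conformal ingredients.} Let $\phi:\X\to\DD$ be the interior Riemann map and $\psi:\C\setminus\overline{\X}\to\C\setminus\overline{\DD}$ the exterior Riemann map normalized so that $\psi(\infty)=\infty$. Both are conformal and hence have distortion identically~$1$. Carath\'eodory extends each to a homeomorphism of closures. Since $\partial\X$ is rectifiable, the F.\ and M.\ Riesz theorem places $\phi'$ and $\psi'$ in the Hardy space $H^1$ and renders the boundary parameterizations $\overline{\phi}|_{\partial\X}$ and $\overline{\psi}|_{\partial\X}$ absolutely continuous with respect to arc-length.

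\textbf{Welding and annular extension.} Form the welding homeomorphism $\tau:=\overline{\psi}\circ\overline{\phi}^{\,-1}:\s^1\to\s^1$ and write $\tau(e^{i\theta})=e^{i\alpha(\theta)}$ with $\alpha$ increasing. Fix $R>1$ large enough that $\{|w|\le R\}$ encloses all relevant images. I would build a homeomorphism $T:\{1\le|w|\le R\}\to\{1\le|w|\le R\}$ extending $\tau$ on $\s^1$, agreeing with the identity on $\{|w|=R\}$, and satisfying $K_T\in\cl^{\,1}$ on the annulus; beyond the annulus set $T=\mathrm{id}$.

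\textbf{Definition and check of $f$.} Put
\[
f(z):=\begin{cases}\phi(z),& z\in\overline{\X},\\ T^{-1}\circ\psi(z),& z\in\C\setminus\overline{\X}.\end{cases}
\]
By construction of $\tau$, the two formulas agree on $\partial\X$, so $f$ is a homeomorphism of $\C$ with $f(\X)=\DD$. Inside $\X$, $K_f\equiv 1$; outside, the conformality of $\psi$ gives $K_f(z)=K_T(\psi(z))$, and a change of variables reduces local integrability of $K_f$ to that of $K_T$ on its annulus (with a locally bounded conformal weight).

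\textbf{Main obstacle.} The entire weight of the proof rests on producing $T$ with $K_T\in\cl^{\,1}$. A natural first try is the radial interpolation
\[
T(re^{i\theta})=r\exp\!\bigl(i\,[\lambda(r)\alpha(\theta)+(1-\lambda(r))\theta]\bigr),\quad \lambda(1)=1,\ \lambda(R)=0,
\]
whose polar distortion is controlled by $(1+U_\theta^{\,2})/U_\theta$ with $U_\theta=\lambda\alpha'+(1-\lambda)$. The required $L^1$ bound then reduces to $\alpha',\,1/\alpha'\in L^1(\s^1)$. The first is automatic with integral~$2\pi$. The second is the crux: differentiating $\phi(e^{i\theta})=\psi(e^{i\alpha(\theta)})$ yields the identity $\alpha'(\theta)=|\phi'(e^{i\theta})|/|\psi'(e^{i\alpha(\theta)})|$, so $1/\alpha'\in L^1(\s^1)$ becomes a pairing between the two $H^1$ boundary derivatives. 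My plan is to handle it by combining the almost-everywhere non-vanishing of $|\phi'|$ and $|\psi'|$ on $\s^1$ (another consequence of rectifiability via F.\ and M.\ Riesz) with the change of variables $\beta=\alpha(\theta)$. Should the radial extension prove too tight, a robust fallback is a Whitney-type construction: dyadically decompose the annulus and replace $\tau$ by a piecewise-affine extension on each box, so that the total contribution to $\int K_T$ is controlled by the finite arc-length of $\partial\X$.
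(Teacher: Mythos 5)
There is a genuine gap, and it sits exactly where you say the weight of the proof rests. Your radial interpolation reduces the construction of $T$ to $\alpha',\,1/\alpha'\in L^{1}(\mathbb{S}^{1})$, and the condition $1/\alpha'\in L^{1}$ is not a consequence of rectifiability; it actually fails for rectifiable domains covered by the theorem, including the paper's own cusp domains. Writing the identity correctly, $\phi^{-1}(e^{i\theta})=\psi^{-1}(e^{i\alpha(\theta)})$, one gets $\alpha'(\theta)=|(\phi^{-1})'(e^{i\theta})|/|(\psi^{-1})'(e^{i\alpha(\theta)})|$. For $\X=\mathbb D^{\prec}_\beta$ the complement near the cusp tip is a thin power cusp, so the exterior harmonic measure of the boundary arcs at distance $t$ from the tip is of order $\exp(-c\,t^{-(\beta-1)})$ and hence $|(\psi^{-1})'|$ is exponentially large near the corresponding prevertex, while $|(\phi^{-1})'|\approx t^{1/2}$ (an angle-$2\pi$ corner seen from inside) on a $\theta$-set of measure $\approx t^{1/2}$. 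Thus $\int_{\mathbb S^1} d\theta/\alpha' \gtrsim \sum_k 2^{-k}\exp\bigl(c\,2^{k(\beta-1)}\bigr)=\infty$: the crux condition is false, so the radial extension cannot work, and neither a.e.\ nonvanishing of the boundary derivatives nor the substitution $\beta=\alpha(\theta)$ rescues it (that substitution turns $\int d\theta/\alpha'$ into $\int d\beta/[\alpha'(\alpha^{-1}(\beta))]^{2}$, which is worse). The Whitney/piecewise-affine fallback is not an argument either: the distortion of such an extension is governed by quasisymmetry-type ratios of adjacent image arcs, which finite total arc length does not control. Finally, even granting $K_T\in\cl^{1}$ on the annulus, your transfer step is incorrect as stated: on the exterior, $\int K_f(z)\,\dtext z=\int K_{T^{-1}}(w)\,|(\psi^{-1})'(w)|^{2}\,\dtext w$, and for a merely rectifiable boundary the conformal weight $|(\psi^{-1})'|^{2}$ is \emph{not} locally bounded up to $|w|=1$ (it is only area-integrable there), precisely where $K_{T^{-1}}$ blows up; a product of two integrable functions need not be integrable.

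For contrast, the paper avoids conformal welding altogether. By Lemma~\ref{lem:intdist}, $\X$ is an $\cl^{1}$-quasidisk if and only if there is a homeomorphism $h\colon\C\onto\C$ of class $\W^{1,2}_{\loc}(\C,\C)$ with $h(\mathbb D)=\X$, so the whole problem becomes a Sobolev Jordan--Sch\"onflies extension problem for a single boundary homeomorphism $\phi\colon\partial\mathbb D\onto\partial\X$. Choosing $\phi$ to be the constant-speed (arc-length proportional) parametrization, the hypothesis of Lemma~\ref{lem:extension} (the extension theorem of Koski--Onninen) is verified in two lines: $|\phi'|$ is constant, so the inner integral reduces to $\int_0^{2\pi}|\log t|\,\dtext t<\infty$. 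If you want to salvage your welding route, you would need an annular extension $T$ adapted to the weight $|(\psi^{-1})'|^{2}$, which is essentially the extension problem that Lemma~\ref{lem:extension} already solves.
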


Nevertheless, the $\mathscr L^{\, p}$-quasidisks with $\,p>1\,$  can be characterized by  model singularities at their boundaries.
The most specific singularities, which fail to  satisfy the Ahlfors' condition~\eqref{eq:ah}, are cusps.  Let
us consider the power-type  inward and outward cusp domains, see Figure~\ref{fig:cusps}.  For $\beta>1$ we consider a disk with inward cusp defined  by
\[\mathbb D^\prec_\beta =\mathbb B(1-\beta, r_\beta) \setminus \{ z=x+iy \in \mathbb C \colon x \ge 0, \abs{y} \le x^\beta \}\;,\;\;r_\beta = \sqrt{\beta^2+1} \, . \]
Whereas a disk with outer cusp will be defined  by
\[\mathbb D^\succ_\beta = \{z=x+iy \in \C \colon 0<x<1, \abs{y} < x^\beta\} \cup \mathbb B(1+\beta, r_\beta) \, . \]
Here, $r_\beta = \sqrt{\beta^2+1}$.

\begin{figure}[htbp]
\centering
\includegraphics[width=1.00\textwidth]
{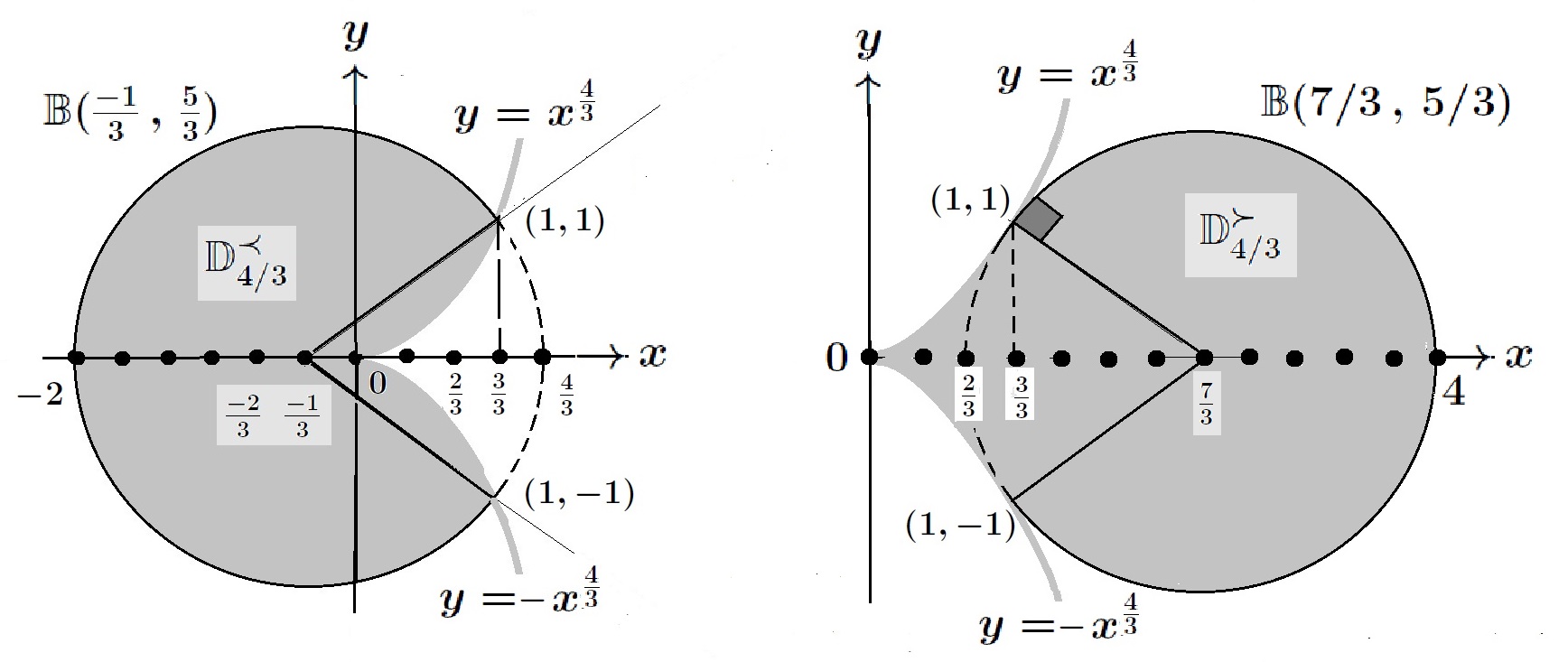}
\caption{The inner and outer \textit{power cusps} in the disks $\,\mathbb D^\prec_\beta\,$  and $\,\mathbb D^\succ_\beta\,$\;, with $\,\beta = \frac{4}{3}\,.\,$}
\label{fig:cusps}
\end{figure}

Note,  all of these domains fail to satisfy the Alhfors' condition~\eqref{eq:ah}. However, replacing $|a-b|$ in~\eqref{eq:ah} by $|a-b|^\alpha$ we obtain:
\begin{definition}
A Jordan domain $\X \subset \C$ is  {\it $\alpha$-Ahlfors regular}, with $\,\alpha \in (0,1]\,$,  if there is a constant $\,1 \leqslant \gamma\ <\infty \,$ such that for each pair of distinct points $a, b \in \partial \X$ we have
\begin{equation}\label{eq:ahalpha} \diam \Gamma   \le \gamma \, \abs{a-b}^\alpha\end{equation}
where $\,\Gamma\,$ is the component of $\partial \X \setminus \{a, b\}\,$ with smallest diameter.
\end{definition}

\begin{theorem}\label{thm:lp}
Let $\X$ be either $\mathbb D^\prec_\beta$ or $\mathbb D^\succ_\beta$ and $1<p<\infty$. Then $\X$ is a $\mathscr L^{\, p}$-quasidisk if and only if $\beta < \frac{p+3}{p-1}$; equivalently, $p < \frac{\beta+3}{\beta-1}$.
\end{theorem}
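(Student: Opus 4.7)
The plan is to establish each direction of this sharp characterization separately; both cusp types are treated by analogous arguments, so I sketch the outward cusp $\mathbb D^\succ_\beta$.

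For \emph{necessity}, suppose $f \colon \C \to \C$ is a homeomorphism of $\mathscr L^{p}$-distortion with $f(\mathbb D^\succ_\beta) = \mathbb D$, and let $w_0 = f(0) \in \partial \mathbb D$ be the image of the cusp tip. For each small $r > 0$, consider the annular slab $A_r = \mathbb D^\succ_\beta \cap (B(0, 2r) \setminus B(0, r))$ together with the curve family $\Gamma_r$ in $A_r$ joining the upper and lower cusp boundaries $y = \pm x^{\beta}$. From the cusp profile one computes $\Mod(\Gamma_r) \sim r^{1 - \beta}$ as $r \to 0$, while the image $f(\Gamma_r)$ lies in a small neighborhood of the smooth boundary point $w_0$ and therefore has $\Mod(f\Gamma_r) = O(1)$ uniformly in $r$. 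Inserting these into a modulus-distortion inequality appropriate to $\mathscr L^p$-distortion mappings, of the form
\[
\Mod(\Gamma_r) \le C \, \|K_f\|_{L^p(A_r)}^{\alpha(p)} \, \Mod(f \Gamma_r)^{\gamma(p)},
\]
with suitable exponents $\alpha(p), \gamma(p)$, then summing over the dyadic scales $r_k = 2^{-k}$ and invoking $K_f \in \mathscr L^{p}_{\mathrm{loc}}(\C)$, would force the required bound $\beta < (p+3)/(p-1)$.

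For \emph{sufficiency}, assuming $\beta < (p+3)/(p-1)$, I would construct $f$ explicitly. Away from the cusp tip the boundary is smooth, and a standard quasiconformal map handles the bulk. Near the cusp tip the main ingredient is a preliminary radial stretch $g(z) = z \,|z|^{\alpha - 1}$, a global homeomorphism of $\C$ of bounded distortion $\max\{\alpha, 1/\alpha\}$, which takes the $\beta$-cusp to one of reduced exponent $\beta' = 1 + (\beta-1)/\alpha$. This is composed with a sharp cusp-flattening model map in polar coordinates adapted to the reduced cusp, with the parameter $\alpha$ tuned so that the combined distortion is $\mathscr L^p$-integrable exactly when $\beta < (p+3)/(p-1)$. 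An alternative route glues the conformal maps of $\mathbb D^\succ_\beta$ and $\C \setminus \overline{\mathbb D^\succ_\beta}$ via an $\mathscr L^{p}$-distortion self-map of $\mathbb D$ handling the welding at $w_0$; the welding analysis of conformal maps at cusps yields the same sharp exponent.

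The chief obstacle is capturing the sharp constant $(p+3)/(p-1)$ rather than the weaker $(p+1)/(p-1)$ produced by the most naive polar-stretch model map $(r, \theta) \mapsto (r, \pi \theta/(2 r^{\beta - 1}))$: the radial pre-stretch (or the welding analysis) is what supplies the extra ``$+2$'' in the numerator, while on the necessity side the correct choice of curve family and an appropriately scaled modulus inequality play the analogous role.
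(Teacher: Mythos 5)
Your proposal leaves the genuinely hard content of the theorem --- the exact exponent $\frac{p+3}{p-1}$ --- unproved in both directions. On the necessity side there are three concrete problems. (a) The key inequality $\Mod(\Gamma_r)\le C\,\|K_f\|_{L^p(A_r)}^{\alpha(p)}\Mod(f\Gamma_r)^{\gamma(p)}$ is never stated with actual exponents or a proof, and since the whole point is a sharp numerical threshold, ``suitable exponents \dots would force the bound'' is not an argument. (b) The claim $\Mod(f\Gamma_r)=O(1)$ is unjustified and in general false: the images of the two boundary arcs are arcs of $\partial\mathbb D$ sharing the limit point $w_0$, and a connecting family confined to a small neighborhood of a smooth boundary point can have arbitrarily large modulus; indeed for the conformal map of $\mathbb D^\succ_\beta$ onto $\mathbb D$, conformal invariance gives $\Mod(f\Gamma_r)=\Mod(\Gamma_r)\approx r^{1-\beta}\to\infty$, so nothing about the location of the image yields your bound. (c) Structurally, your estimate only uses $\|K_f\|_{L^p(A_r)}$ on one side of $\partial\X$, while an $\mathscr L^p$-quasidisk has $K_f\in\mathscr L^{\,p}_{\loc}(\C)$, i.e.\ on both sides, and the sharp constant genuinely needs both: Theorem~\ref{thm:main} shows that with exponent $q$ on one side and $p$ on the other the critical power is $\frac{pq+2p+q}{pq-q}$, which exceeds $\frac{p+3}{p-1}$ as soon as one side is relaxed (e.g.\ $q\to 1$ gives $\frac{3p+1}{p-1}$); hence no one-sided estimate can prove the ``only if'' part. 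The paper's proof instead runs a length--area and isoperimetric argument for the reflection $g=f^{-1}\circ\Psi\circ f$ in $\partial\X$, whose crucial estimate (Theorem~\ref{thm:reflection}) carries the product $\left(\int_{g(\U)}K_f^p\right)^{1/2}\left(\int_{\U}K_f^p\right)^{1/2}$, i.e.\ the distortion on both sides at once.

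On the sufficiency side, the proposed mechanism for the extra ``$+2$'' does not work. The radial pre-stretch $g(z)=z|z|^{\alpha-1}$ is quasiconformal, so $K_{F\circ g}\le K_g\cdot (K_F\circ g)\le C\,K_F\circ g$; with the naive angular model $F$ for the reduced cusp, $K_F(w)\approx |w|^{-(\beta'-1)}$, hence $K_{F\circ g}(z)\approx |z|^{-\alpha(\beta'-1)}=|z|^{-(\beta-1)}$ on the original spike, independently of $\alpha$. Integrating this over the spike, whose angular width at radius $r$ is $\approx r^{\beta-1}$, gives convergence of $\int K^p$ exactly when $p(\beta-1)<\beta+1$, i.e.\ $\beta<\frac{p+1}{p-1}$: a bounded-distortion pre- (or post-) composition changes the distortion only by a bounded factor and can never move the threshold, so you remain stuck at the naive exponent. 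The paper's construction gets past this precisely by using a radial map of \emph{unbounded} distortion, $\tilde r(r)=e\exp\left(-r^{-\gamma_\beta}\right)$, which costs $K_f\lesssim r^{-\gamma_\beta}\in\mathscr L^{\,p}(\X)$ inside (whence the constraint $\gamma_\beta<\frac{2}{p}$) and in exchange reduces the distortion outside to $\lesssim r^{-|\beta-\gamma_\beta-1|}$; the admissible window $\max\left\{\frac{\beta(p-1)-(p+1)}{p},\,0\right\}<\gamma_\beta<\frac{2}{p}$ is nonempty exactly when $\beta<\frac{p+3}{p-1}$. Your alternative welding route is likewise only a pointer with no analysis of the welding homeomorphism at the cusp. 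As it stands, then, both halves of the proposal have genuine gaps precisely where the sharp exponent must be produced.
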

This simply means that $\,\X\,$ is $\,\frac{1}{\beta}\,$-Ahlfors regular. Theorem~\ref{thm:lp} tells us how much  the distortion of  a homeomorphism $f \colon \C \to \C$ is needed to flatten (or smoothen) the  power type cusp $t^\beta$. It turns out that  a lot more distortion  is needed  to create a cusp than to smooth it back.  Indeed, in a series of papers~\cite{KT1, KT3, KT2},  Koskela and Takkinen  raised such an inverse  question.  For which cusps does there exist a homeomorphism $\,h \colon \mathbb C \to \mathbb C\,$ of finite distortion $\, 1 \leqslant K_h <\infty\,$ which takes $\mathbb D$ onto  $\mathbb D^\prec_\beta\,$?  A necessary condition turns out to be that
$e^{K_h} \not \in \mathscr L_{\loc}^{\, p} (\C)$ with $\,p> \frac{2}{\beta-1}$. However, if $p< \frac{2}{\beta-1}$ there is such a homeomorphism. Especially, each power-type  cusp domain can be obtained as the image of open disk by a homeomorphism $h \colon \C \to \C$ with $K_h \in \mathscr L_{\loc}^{\, p} (\C)$ for all $p<\infty$. Combining this with Theorem~\ref{thm:lp} boils down to the following postulate:
\begin{center}
{\it Creating singularities takes almost no efforts (just allow for a little distortion) while tidying them up is a whole new story.}
\end{center}

\subsection{The energy  for $\,\mathscr L^{\,p}\,$-distortion}
 We need to pullback to $\,\mathbb C\,$ the Euclidean area element $\,\textnormal{d}\sigma(\xi) \,$ of $\,\mathbb S^2 \subset \mathbb R^3\,$ by stereographic projection $\,\Pi : \mathbb S^{\circ}  \onto \mathbb C\,$, where
$$ \mathbb S^{\circ} \bydef \{\,\xi = (w,t) \colon w \in \mathbb C\,,\, -1 \leqslant t < 1\,,\, |w|^2  + t^2 = 1 \,\} \subset \mathbb C \times \mathbb R  \cong \mathbb R^3 \, . $$
The image point $\,z = \Pi\xi\,$ is defined by the rule $\,\Pi(w,t) = \frac{w}{1-t}\,$. For the inverse projection $\,\Pi^{-1}\,: \,\mathbb C \onto \mathbb S^{\circ}\,$ we have:
$$
\xi = \Pi^{-1}z = (w,t)\;,\; \textnormal{ where } \; w = \frac{2\,z}{1+|z|^2} \;\; \textnormal{and}\; \; t = \frac{|z|^2 -1}{|z|^2 + 1} \, . 
$$
Denote by $\,\mathbf dz\,=\, \textnormal{d}x\,\textnormal{d} y\,$ the area element in $\,\mathbb C\,$,  $\, z = x+ iy \,$. The  general formula of integration by change of variables  reads as follows:
$$
\textnormal{d}\sigma(\xi) =  \frac{4\,\mathbf dz }{( |z|^2 + 1) ^2}\;,\; \textnormal{hence}\;  \;\int_\mathbb C  \frac{4\, G(z)\,\mathbf dz}{(|z|^2 + 1 )^2} = \int_{\mathbb S^{\circ}} G(\Pi\xi) \,\textnormal{d}\sigma(\xi)
$$
Now, one might consider mappings of $\,\mathscr L^{\,p}\,$-distortion which have finite $\,\mathscr L^{\,p}\,$-energy:
\begin{equation} \label{Energy}
\mathbf E[f] \bydef 4 \int_\mathbb C \frac{ [ K_f(z) ]^p\,\mathbf dz}{(|z|^2 + 1 )^2} = \int_{\mathbb S^{\circ}}[  K_{\mathcal F}(\xi) ]^p \,\textnormal{d}\sigma(\xi) < \infty ,
\end{equation}
 where $\,K_\mathcal F : \mathbb S^{\circ} \,\rightarrow [1, \infty)\,$ stands for the distortion function of the mapping $\,\mathcal F =  f \circ \Pi  : \mathbb S^{\circ}  \onto \mathbb C\,$. For the energy formula~\eqref{Energy}, we invoke the equality $\, K_{\mathcal F}(\xi) = K_f(\Pi\xi)\,$ which is    due to the fact that $\,\Pi\,$ is conformal. This formula makes it clear that $\,K\,$-quasiconformal mappings $\,f : \mathbb C \onto \mathbb C\,$ have finite $\,\mathscr L^{\,p}\,$-energy and $\,\mathbf E[f] \leqslant 4 \pi  K^p\,$. 
 
 In the spirit of extremal quasiconformal mappings in Teichm\"uller spaces, one  might be interested in studying homeomorphisms $\,f : \mathbb C \onto \mathbb C\,$  of smallest $\,\mathscr L^{\,p}\,$-energy, subject to the condition $\,f(\mathbb X)  = \mathbb Y\,$. Here the given pair $\,\mathbb X , \mathbb Y\,$ of subsets in $\,\mathbb C\,$ is assumed to admit at least one such homeomorphism of finite energy. To look at a more specific situation, take for $\,\mathbb X\,$ an $\,\mathscr L^{\,p}\,$-quasidisk  from  Theorem \ref{thm:lp}, and the unit disk $\,\mathbb D\,$  for $\,\mathbb Y\,$. What is then the energy-minimal map $\,f : \mathbb C \onto \mathbb C\,$? Polyconvexity of the integrand will certainly help us find what conditions are needed for the existence of energy-minimal mappings. 
 We shall not enter these topics here, but refer to \cite{AIMO, IOlp, MMc}   for related results.
 
\subsection{The main result}
Since a simply connected Jordan domain is conformally equivalent with the unit disk, it is natural to consider special  $\mathscr L^p$-quasidisks; namely, the domains $\X$ which can be mapped onto an open disk under a homeomorphism $f \colon \C \to \C$ with $p$-integrable distortion and   to be  quasiconformal when restricted to $\,\X\,$.

The answer to this question can be inferred from  our main result which also generalizes Theorem~~\ref{thm:lp}.
\begin{theorem}[Main Theorem] \label{thm:main}
Consider power-type inward cusp domains $\X=\mathbb D^\prec_\beta\,$ with $\beta>1\,$.  Given a pair $\,(q,p)\,$ of exponents  $\, 1\leqslant  q  \le \infty\,$ (for $\mathbb X\,$) and $\,1< p \leqslant \infty\,$ (for the complement of $\,\mathbb X\,$), define the so-called critical power of inward cusps
\begin{equation}\label{eq:betac} \beta_{\,\textnormal{cr}}  \bydef \begin{cases} \frac{p\,q\,+\,2\,p\,+\,q}{p\,q\,-\,q} \, , \quad & \textnormal{ if } 1<p<\infty \textnormal{ and } q< \infty \\
\frac{2}{q}+1\, ,  & \textnormal{ if } p=\infty  \,\;\textnormal {and}\;\; q < \infty\\
\frac{p\,+\,1}{p\,-\,1}\, ,  & \textnormal{ if } 1< p < \infty \;\;\textnormal {and} \; q=\infty
\end{cases}\end{equation}
Then there exists a Sobolev homeomorphism $f \colon \C \to \C$ which takes $\X$ onto $\mathbb D$ such that
\begin{itemize}
\item $K_f \in \mathscr L^{\, q} (\X) $
\end{itemize}
and
\begin{itemize}
\item $K_f \in  \mathscr L^{\, p} (\mathbb B_R \setminus \overline{ \X })\,$ for every $R>2$,
\end{itemize}
  if and only if $\beta < \beta_{\,\textnormal{cr}}$.
\end{theorem}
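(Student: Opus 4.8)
The plan is to prove both implications by localising at the cusp tip $0$. Away from $0$ the boundary $\partial\X$ is real-analytic, so any prescribed correspondence there extends to a homeomorphism of the complementary pieces with bounded distortion (this is the mechanism behind Theorem~\ref{theorem2}); hence it suffices to control $f$ on a fixed ball $\mathbb B_\delta$ around $0$. Conjugating the source by $z\mapsto\log z$ and the target by $\zeta\mapsto\log(\zeta-f(0))$ turns the picture, in coordinates $u=\log|z|$, $v=\arg z$, into a strip of width $\approx 2\pi$ from which a channel $\{|v|\lesssim|z|^{\beta-1}\}$ — the image of the removed thorn $S=\{x\ge 0,\ |y|\le x^\beta\}$ — is pinched off as $u\to-\infty$; correspondingly a neighbourhood of $f(0)\in\partial\mathbb D$ becomes two half-strips of width $\pi$, one carrying $\X$ and one carrying $S$, so that $f$ must simultaneously compress the $\approx 2\pi$ opening of $\X$ to $\pi$ and open the $0$-opening of the thorn to $\pi$.

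For the ``if'' part I would choose a parameter $\theta\in[1,\beta]$ and build an explicit \emph{unfolding map} near $0$: on the thorn it sends the cross-section $\{x\}\times[-x^\beta,x^\beta]$ onto the half-circle of radius $\rho(x)$ in $\C\setminus\overline{\mathbb D}$, where $\rho(x)=\exp(-c\,x^{1-\theta})$ when $\theta>1$ and $\rho(x)=x^{c}$ when $\theta=1$; on $\X$ it contracts the angular variable by the factor $\approx 2$ and interpolates the radial variable between $\rho(|z|)$ along $\partial^{\pm}S$ and $\approx|z|$ along the negative real axis. A direct computation of $Df$ in log-polar coordinates gives $K_f\lesssim|z|^{-(\theta-1)}$ on $\X\cap\mathbb B_\delta$ — and this holds over the whole half-disk, not merely a thin collar of the thorn, because the radial mismatch forces a large angular derivative of the radial coordinate throughout a full sector — and $K_f\lesssim|z|^{-(\beta-\theta)}$ on $S\cap\mathbb B_\delta$. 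Since $S$ has thickness $\approx|z|^\beta$, integrability in $\mathscr L^q(\X)$ and in $\mathscr L^p(\mathbb B_R\setminus\overline{\X})$ amounts exactly to $(\theta-1)q<2$ and $(\beta-\theta)p<\beta+1$; such a $\theta\in[1,\beta]$ exists if and only if $\beta-1<\tfrac2q+\tfrac{\beta+1}{p}$, which rearranges to $\beta<\beta_{\,\textnormal{cr}}$, the limiting cases $q=\infty$ (forcing $\theta=1$) and $p=\infty$ (forcing $\theta=\beta$) reproducing the other two formulas in~\eqref{eq:betac}. One then checks the pieces match along $\partial^{\pm}S$, assembles a Sobolev homeomorphism, and glues it to a bounded-distortion map on $\C\setminus\mathbb B_\delta$.

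For the ``only if'' part I would argue by contradiction, assuming such an $f$ with $\beta\ge\beta_{\,\textnormal{cr}}$, via modulus (equivalently capacity) estimates for mappings of $\mathscr L^p$- and $\mathscr L^q$-integrable distortion applied to the dyadic annuli $A_k=\{2^{-k-1}\le|z|<2^{-k}\}$ at the tip. On the interior pieces $A_k\cap\X$ one has a curve family of modulus $\approx 1$ whose image family in $\mathbb D$ has modulus controlled, through H\"older's inequality, by $\|K_f\|_{\mathscr L^q(A_k\cap\X)}$ and the scales $\diam f(A_k)$; on the thorn pieces $A_k\cap S$, which $f$ sends to genuinely ``fat'' pieces at $f(0)$, one gets a complementary bound involving $\|K_f\|_{\mathscr L^p(A_k\cap S)}$ and the thorn thickness $2^{-k\beta}$. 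Telescoping over $k$, using that the two distortion integrals are summable in $k$ and that the thorn's image $f(S)$ must fill a genuine half-disk at $f(0)$, forces $\beta-1<\tfrac2q+\tfrac{\beta+1}{p}$, contradicting $\beta\ge\beta_{\,\textnormal{cr}}$; the case $q=p$ of this statement contains Theorem~\ref{thm:lp}.

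I expect the crux to be the sharp two-sided bookkeeping: the interior ($\mathscr L^q$) and exterior ($\mathscr L^p$) estimates each see only ``half'' the geometry and must be coupled through the common scales $\diam f(A_k)$, so one needs the exactly-balanced form of the modulus inequalities for $\mathscr L^r$-distortion — any slack in either one would shift the critical exponent. On the construction side the matching difficulty is to verify that the $\X$-distortion cannot be confined to a thin collar of the thorn (which would admit a larger $\beta$): one must see that combining the $2\pi\!\to\!\pi$ angle compression with the radial mismatch $\rho(|z|)\ll|z|$ genuinely forces $K_f\approx|z|^{-(\theta-1)}$ across a full angular sector, not just near $\partial^{\pm}S$.
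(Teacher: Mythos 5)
Your existence half is, modulo the substitution $\gamma=\theta-1$, the paper's own construction with the same exponent bookkeeping ($K_f\lesssim |z|^{-\gamma}$ in the cusp, $K_f\lesssim|z|^{-(\beta-1-\gamma)}$ in the thorn of thickness $\approx|z|^{\beta}$, and such a $\gamma$ exists precisely when $\beta<\beta_{\mathrm{cr}}$). The real problem is the ``only if'' half, which is where the content of the theorem lies: the sentence ``telescoping over $k$ \dots forces $\beta-1<\frac2q+\frac{\beta+1}{p}$'' is exactly the statement that has to be proved, and your sketch supplies no mechanism for it. The obstacle you yourself identify --- the interior $\mathscr L^{q}$-estimate and the exterior $\mathscr L^{p}$-estimate must be coupled through the unknown image scales $\operatorname{diam} f(A_k)$ --- is left unresolved: each single-scale modulus inequality involves $\|K_f\|_{\mathscr L^{q}(A_k\cap\mathbb{X})}$ or $\|K_f\|_{\mathscr L^{p}(A_k\cap S)}$ together with an image size about which nothing is known a priori (it may collapse super-exponentially, as the sharp construction shows it must), and with the wrong behaviour of these sizes the individual inequalities are vacuous. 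The paper closes precisely this loophole with two devices for which you would need substitutes: (i) the reflection $g=f^{-1}\circ\Psi\circ f$ and the inequality of Theorem 3.2, whose right-hand side is the product $\bigl(\int_{g(\mathbb{U})}K_f^{s}\bigr)^{1/2}\bigl(\int_{\mathbb{U}}K_f^{s}\bigr)^{1/2}$ with $s=\min\{p,q\}$ --- this single inequality is what couples the two sides of $\partial\mathbb{D}^\prec_\beta$; and (ii) the dichotomy of Lemma 5.1 on dyadic scales: either $|g(\mathbb{U}_{\epsilon_j})|\le\epsilon_j^{2}$, in which case the length bound $\ell(g(\mathbb{I}_t))\ge 2t$ is used, or $|g(\mathbb{U}_{\epsilon_j})|\le 5|g(\mathbb{U}_{\epsilon_{j+1}})|$, in which case the isoperimetric inequality is used --- and this is how the unknown image measure is eliminated. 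Without an analogous coupling-plus-dichotomy (or a genuinely quantitative telescoping lemma that removes the unknown scales), the nonexistence direction is not established; your modulus route may well be workable, but as written it is a programme, not a proof.

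There is also a concrete issue in the construction as you describe it. If you insist that the image scale stays $\approx|z|$ along the negative real axis while it is $\rho(|z|)=\exp(-c|z|^{1-\theta})$ along $\partial^{\pm}S$, then along each arc $\{|z|=r\}\cap\mathbb{X}$ the logarithm of the image scale must change by $\approx c\,r^{1-\theta}$, and with the obvious (log-)linear angular interpolation one computes $K_f(r,\phi)\approx r^{-(\theta-1)}\min\bigl\{(1-\phi/\pi)^{-1},\,r^{-(\theta-1)}\bigr\}$; then $\int_{\mathbb{X}}K_f^{q}$ converges only when $(\theta-1)(2q-1)<2$, which for $q>1$ is strictly stronger than the needed $(\theta-1)q<2$, so the resulting range of $\beta$ is not sharp. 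This can be repaired by a carefully chosen nonlinear angular profile (for instance $1-\lambda(\phi)\approx(1-\phi/\pi)^{2}$), but your sketch does not specify one, and your claim that the bound $K_f\lesssim|z|^{-(\theta-1)}$ ``holds over the whole half-disk'' is exactly the point where the unspecified interpolation matters. The paper avoids the issue entirely by making the radial compression independent of the angle, $\tilde r(r)=e\exp\bigl(-r^{-\gamma_\beta}\bigr)$ on all of $\mathbb{D}$, mapping $\mathbb{D}^\prec_\beta$ onto the Lipschitz cusp $\mathbb{D}^\prec_1$; with that choice $K_f\approx r^{-\gamma_\beta}$ in the cusp and $K_f\lesssim r^{-|\beta-1-\gamma_\beta|}$ in the thorn follow from a one-line computation of the polar differential, and no matching of scales across a sector is ever needed.
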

Here and what follows $\mathbb B_R= \{z \in \mathbb C \colon \abs{z} <R\}$ for $R>0$. 
\begin{figure}[htbp]
\centering
\includegraphics[width=1.00\textwidth]
{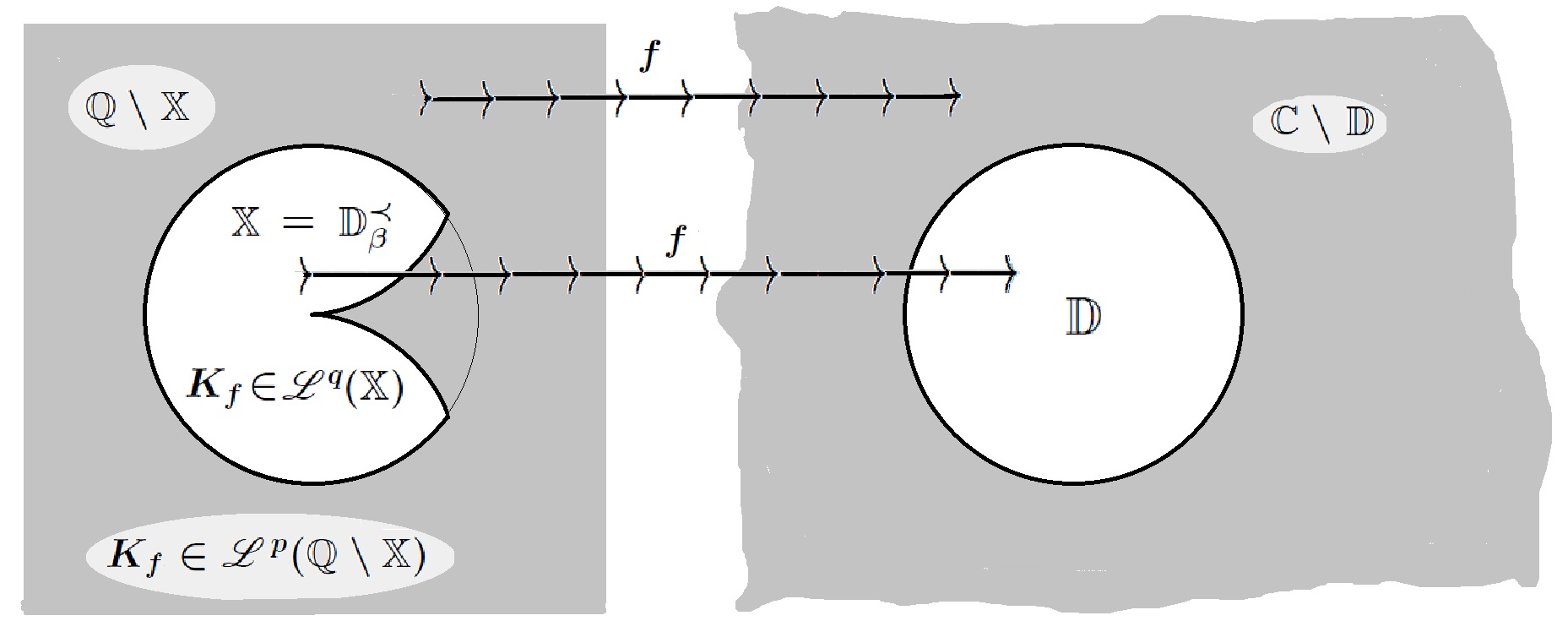}
\caption{An $\,\mathscr L^{q, p}\,$-quasidisk}
\label{fig:qpquasidisk}
\end{figure}
Applying  the standard inversion of  unit disk, Theorem~\ref{thm:main} extends to the power-type outer cusp domains as well. In this case the roles of $p$ and $q$ are interchanged. The reader interested in learning more about the conformal case  $f \colon \mathbb D^\prec_\beta  \onto \mathbb D$ is refer to~\cite{Xu}.

Our proof of Theorem~\ref{thm:main}  is self-contained. The ``only if''  part of Theorem~\ref{thm:main}   relies on  a regularity estimate  of a reflection in $\partial \mathbb D^\prec_\beta$.  Such a reflection is defined and examined in the boundary of an arbitrary $\mathscr L^p$-quasidisk.  In this connection  we recall  a classical result of K\"uhnau~\cite{Ku} which  tells us that a Jordan domain  is a qusidisk if and only if it
admits a qusiconformal reflection in its boundary. Before going into details about the boundary reflection proceeders  (Section \ref{sec:ref}) we need some preliminaries.\\

\section{Preliminaries}

First we recall a well-known theorem of Gehring and Lehto~\cite{GL} which asserts that a planar  open mapping with finite partial derivatives at almost every point is differentiable at almost every point. For homeomorphisms the result was earlier established by Menchoff~\cite{Me}.

\begin{lemma}\label{lem:differen}
Suppose that $f \colon \C\rightarrow\C$ is a homeomorphism in the class $\W^{1,1}_{\rm loc}(\C,\C)$. Then $f$ is differentiable almost everywhere.
\end{lemma}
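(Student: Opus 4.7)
The plan is to separate the Sobolev input (which supplies the partial derivatives) from the topological input (which upgrades their existence to genuine differentiability), and then reduce everything to the Gehring--Lehto/Menchoff theorem already cited in the statement.

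First I would invoke the ACL characterization of $\W^{1,1}_{\loc}$: the homeomorphism $f$ admits a representative that is absolutely continuous on almost every line parallel to each coordinate axis. In particular, the classical partial derivatives $\partial_x f$ and $\partial_y f$ exist and are finite Lebesgue-almost everywhere in $\mathbb C$. This step is a standard consequence of Fubini and the definition of weak derivatives, and requires no use of the homeomorphism property.

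Next I would supply the topological ingredient: since $f$ is a homeomorphism of $\mathbb C$ onto $\mathbb C$, it is in particular an open mapping. With the partial derivatives existing a.e.\ and openness in hand, the Gehring--Lehto theorem~\cite{GL} applies verbatim and yields differentiability of $f$ at almost every point. For the homeomorphism version (which is all we need here) this was obtained earlier by Menchoff~\cite{Me}. For completeness I would briefly indicate the internal mechanism of that theorem, without reproducing it: at a point $z_0$ of Lebesgue density for the set where the partial derivatives exist, one uses Fubini to select horizontal and vertical chords across a small square around $z_0$ on which $f$ is absolutely continuous, integrates the partials to control $f(z) - f(z_0) - Df(z_0)(z - z_0)$ on those chords, and then uses openness (a maximum-principle-type propagation) to push those boundary oscillation bounds into the interior of the square; a density argument finishes the job.

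The only step with any real depth is this topological-to-analytic bridge inside Gehring--Lehto, and here it is used purely as a black box. Consequently there is no genuine obstacle in the present application: the whole argument amounts to checking that the hypotheses of \cite{GL, Me} are met by any $\W^{1,1}_{\loc}$ homeomorphism of the plane.
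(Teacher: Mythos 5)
Your proposal is correct and matches the paper's treatment: the paper states this lemma as the Gehring--Lehto theorem (Menchoff for homeomorphisms), relying exactly on the ACL property of $\W^{1,1}_{\loc}$ maps to furnish a.e.\ finite partial derivatives and on openness of the homeomorphism to apply \cite{GL, Me}. Your extra sketch of the internal density/chord argument goes slightly beyond the paper, which uses the cited theorem purely as a black box, but the route is the same.
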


It is easy to see, at least formally, applying a change of variables  that the integral of distortion function  equals the Dirichlet integral of  inverse mapping.
This observation is the key to the fundamental identity which we state next, see~\cite{HK, HKO, MP}.
\begin{lemma}\label{lem:intdist}
Suppose that a homeomorphism $f \colon \C \onto \C$ of Sobolev class $\W^{1,1}_{\loc} (\C, \C)$. Then $f$ is a mapping of $\mathscr L^{1}\,$-distortion if and only if  the inverse $h\bydef f^{-1}\in W^{1,2}_{\loc} (\C, \C)$. Furthermore, then for every  bounded domain $\U \subset \C$ we have
\[ \int_{f(\U)} \abs{Dh(y)}^2 \, \dtext y = \int_\U K_f(x) \, \dtext x \]
and  $J_f(x)>0$ a.e.
\end{lemma}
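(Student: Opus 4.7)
The plan is to reduce the asserted integral identity to a pointwise identity holding at almost every point, and then to swing between the two sides of the equivalence by a single application of the area formula.

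The first ingredient is a linear-algebraic fact special to dimension two: for any $A\in\R^{2\times 2}$ with $\det A>0$, the adjugate of $A$ has the same singular values as $A$ in the opposite order, and hence the same operator norm. Since $A^{-1}=(\det A)^{-1}\mathrm{adj}\,A$, this gives $|A^{-1}|^2\,\det A=|A|^2/\det A$. Combined with Lemma~\ref{lem:differen} applied to $f$ (and, once we know it, to $h$), and with the chain-rule identity $Dh(f(x))=[Df(x)]^{-1}$ at every point where $J_f(x)>0$, I obtain the pointwise bridge
\[
|Dh(f(x))|^2\,J_f(x)\;=\;\frac{|Df(x)|^2}{J_f(x)}\;=\;K_f(x)\qquad\text{for a.e.\ }x\text{ with }J_f(x)>0.
\]

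For the ``only if'' direction I would assume $K_f\in\mathscr L^{\,1}_{\loc}(\C)$. The known theory of homeomorphisms of integrable distortion guarantees $J_f>0$ almost everywhere and that $f$ satisfies Lusin's conditions $(N)$ and $(N^{-1})$. The area formula applied on $\U$ then gives
\[
\int_{\U} K_f(x)\,\dtext x \;=\; \int_{\U} |Dh(f(x))|^2\,J_f(x)\,\dtext x \;=\; \int_{f(\U)} |Dh(y)|^2\,\dtext y,
\]
which is finite on every bounded domain. Combined with the a.e.\ existence of $Dh$ and the fact that a homeomorphism of the plane is monotone along almost every coordinate line, a standard ACL argument upgrades the pointwise derivative to a distributional one and places $h$ in $\W^{1,2}_{\loc}(\C,\C)$. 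For the converse I would assume $h\in\W^{1,2}_{\loc}$. Then $h$ is differentiable a.e.\ by Lemma~\ref{lem:differen}, and the classical fact that the Jacobian of a planar $W^{1,2}$ Sobolev homeomorphism does not vanish on a set of positive measure forces $J_f>0$ a.e. Running the same area-formula chain in reverse produces the integral identity and exhibits $f$ as a mapping of $\mathscr L^{\,1}$-distortion.

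The main obstacle is exactly the assertion $J_f>0$ a.e.\ together with the Lusin conditions for $f$: without these the area formula cannot be invoked. In the ``only if'' direction the nonvanishing of $J_f$ is the deeper half of the theory of mappings of integrable distortion, traditionally obtained by a capacitary/modulus argument; in the ``if'' direction it follows directly from $W^{1,2}$ regularity of the inverse. Once those structural properties are secured, the two-dimensional linear algebra and the change of variables slot together routinely.
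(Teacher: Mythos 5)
You were reconstructing a result the paper itself does not prove: Lemma~\ref{lem:intdist} is quoted from the literature (the paper simply cites \cite{HK, HKO, MP}), so there is no in-paper argument to match, and the statement is a genuinely nontrivial theorem of Hencl--Koskela type. Your skeleton --- the planar identity $|A^{-1}|^2\det A=|A|^2/\det A$, Lemma~\ref{lem:inverse}, and a change of variables --- does explain correctly where the identity $\int_{f(\U)}|Dh|^2=\int_\U K_f$ comes from pointwise, and you rightly flag $J_f>0$ a.e.\ and Lusin-type conditions as the crux. But the way you discharge those points contains genuine errors in both directions.

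In the ``if'' direction, the ``classical fact'' you invoke is false: a planar homeomorphism in $\W^{1,2}_{\loc}$ can have vanishing Jacobian on a set of positive measure --- take $h(x,y)=(\phi(x),y)$ with $\phi$ strictly increasing and Lipschitz, $\phi'=0$ on a fat Cantor set; then $h\in \W^{1,\infty}_{\loc}\subset\W^{1,2}_{\loc}$ and $J_h=0$ on a set of positive area. What is true, and what you actually need, is that this cannot happen when in addition $f=h^{-1}\in\W^{1,1}_{\loc}$; that requires an argument (e.g.\ condition (N) for planar $\W^{1,2}$ homeomorphisms combined with Lemma~\ref{lem:inverse} and the sub-area formula, Lemma~\ref{lem:co-area}), not the citation you gave. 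In the ``only if'' direction the main content of the lemma is that $h$ is Sobolev at all, and your route --- a.e.\ differentiability of $h$ plus ``a standard ACL argument'' using monotonicity ``along almost every coordinate line'' --- does not work: a homeomorphism can be differentiable a.e.\ with pointwise gradient in every $\mathscr L^{p}$ and still fail to be ACL (add a Cantor function to one coordinate), and coordinate functions of planar homeomorphisms need not be monotone on lines; upgrading the pointwise derivative to a distributional one is exactly the hard step of the Hencl--Koskela inverse theorem, proved by estimating the variation of $h$ on lines via the distortion inequality for $f$, not by pointwise differentiation. Moreover, running the change of variables through $f$ over all of $\U$ as an equality needs Lusin's condition (N) for $f$, which cannot be quoted under the bare hypotheses $f\in\W^{1,1}_{\loc}$, $K_f\in\mathscr L^{1}_{\loc}$ (it emerges only a posteriori from the full theorem); the standard proofs avoid this by proving the two inequalities separately, applying the sub-area formula once to $f$ and once to $h$. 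Finally, citing ``known theory'' for $J_f>0$ a.e.\ is borderline circular here, since that assertion is itself part of the conclusion being proved.
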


At least formally  the identity $(h \circ f) (x)=x$, after differentiation,  implies that $Dh (f(x)) Df(x)= {\bf I}$. The validity of such identity under minimal regularity assumptions on the mappings is the essence of the following lemma, see~\cite[Lemma A.29]{HK}.
\begin{lemma}\label{lem:inverse}
Let $f\colon \X\to\Y$ be a homeomorphism which is differentiable at $x\in\X$ with $J_f(x)>0$. Let $h\colon \Y\to\X$ be the inverse of $f$. Then $h$ is differentiable at $f(x)$ and $Dh(f(x))=(Df(x))^{-1}$.
\end{lemma}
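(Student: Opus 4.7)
The plan is to derive differentiability of $h$ at $y_0 := f(x)$ straight from the definition, using $J_f(x)>0$ to invert $Df(x)$ and the homeomorphism hypothesis to transfer convergence from $\Y$ back to $\X$. For $y$ near $y_0$, write $y = f(x')$ with $x' = h(y)$. Differentiability of $f$ at $x$ yields
\[ f(x')-f(x) \;=\; Df(x)(x'-x) \,+\, |x'-x|\,\eta(x'), \]
where $\eta(x')\to 0$ as $x'\to x$. Applying $(Df(x))^{-1}$, which exists because $J_f(x)>0$, and rearranging gives the identity
\[ h(y)-h(y_0) \;-\; (Df(x))^{-1}(y-y_0) \;=\; -\,|x'-x|\,(Df(x))^{-1}\eta(x'). \]

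The crucial step is to control $|x'-x|$ by $|y-y_0|$. Invertibility of $Df(x)$ supplies a constant $m>0$ with $|Df(x)v|\ge m|v|$ for every $v\in\R^2$. Combined with the expansion above this gives $|y-y_0|\ge m|x'-x|-|\eta(x')||x'-x|$. Once $y$ is close enough to $y_0$, continuity of $h$ (inherent to the homeomorphism hypothesis) forces $x'$ to be close enough to $x$ that $|\eta(x')|\le m/2$, whence $|x'-x|\le (2/m)|y-y_0|$. Substituting this into the previous identity yields
\[ \frac{\bigl|h(y)-h(y_0)-(Df(x))^{-1}(y-y_0)\bigr|}{|y-y_0|} \;\le\; \frac{2}{m}\,\|(Df(x))^{-1}\|\,|\eta(x')|, \]
and the right-hand side tends to $0$ as $y\to y_0$. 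This shows that $h$ is differentiable at $y_0$ with $Dh(y_0)=(Df(x))^{-1}$, as required.

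The main point to appreciate, rather than a computational obstacle, is that $f$ is assumed differentiable \emph{only} at the single point $x$: no neighborhood regularity is available, so the classical inverse function theorem cannot be invoked. The role of the homeomorphism hypothesis is precisely to supply the missing continuity of $h$ at $y_0$, which is what activates $\eta(x')\to 0$ whenever $y\to y_0$. Once this continuity is paired with the quantitative injectivity $|Df(x)v|\ge m|v|$ coming from $J_f(x)>0$, the remainder of the argument reduces to the direct linearization written above.
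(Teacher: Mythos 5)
Your proof is correct: the linearization identity, the quantitative lower bound $|y-y_0|\ge \tfrac{m}{2}|x'-x|$ obtained from invertibility of $Df(x)$ plus continuity of $h$ at $f(x)$, and the final limit are all sound. The paper does not prove this lemma itself but cites \cite[Lemma A.29]{HK}, and your direct argument is essentially the same standard one used there, so nothing further is needed.
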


Next we state a crucial version of the area formula for us.

\begin{lemma}\label{lem:co-area}
Let $\X, \Y\subset \C$ be domains and  $g \colon \X \onto \Y$ a homeomorphism.  Suppose that $\V \subset \X$ be a measurable set and $g$ is differentiable at every point of $\V$. If  $\eta$ is a nonnegative Borel measurable function, then
\begin{equation}\label{eq:cvf}
\int_{\V}\eta(g(x))|J_g(x)|\, \dtext x\le \int_{g(\V)}\eta(y)\, \dtext y\, .
\end{equation}
\end{lemma}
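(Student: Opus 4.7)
The plan is to reduce the claimed estimate to the classical area formula for Lipschitz maps by decomposing $\V$ into countably many pieces on each of which the restriction of $g$ is Lipschitz. For each $k \in \N$ define
\[
\V_k \bydef \Bigl\{ x \in \V : |g(y)-g(x)| \le k\,|y-x| \;\text{for every}\; y \in \X \cap B(x,1/k) \Bigr\}.
\]
Continuity of $g$ makes $\V_k$ measurable, and since $g$ is differentiable at every $x \in \V$, the affine approximation by $Dg(x)$ places $x$ into some $\V_k$; hence $\V = \bigcup_k \V_k$. Cover $\V_k$ by countably many balls of radius less than $1/(2k)$, intersect with $\V_k$, and pass to a disjoint refinement. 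The outcome is a measurable partition $\V = \bigsqcup_n E_n$ such that $g|_{E_n}$ is globally $L_n$-Lipschitz for some $L_n < \infty$.

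By Kirszbraun's theorem, extend $g|_{E_n}$ to an $L_n$-Lipschitz map $G_n \colon \C \to \C$. By Rademacher's theorem $G_n$ is differentiable almost everywhere; since $G_n \equiv g$ throughout $E_n$ and $g$ is differentiable at \emph{every} point of $E_n \subset \V$, a density-point comparison shows that $DG_n(x)=Dg(x)$ for almost every $x \in E_n$. The classical area formula, applied to the Lipschitz map $G_n$, reads
\[
\int_{E_n} \eta(G_n(x))\,|J_{G_n}(x)|\,\dtext x \;=\; \int_{G_n(E_n)} \eta(y)\,\#\bigl\{x \in E_n : G_n(x)=y\bigr\}\,\dtext y.
\]
But $G_n$ coincides on $E_n$ with the homeomorphism $g$, so the multiplicity equals one throughout $G_n(E_n)=g(E_n)$; consequently $\int_{E_n}\eta(g(x))\,|J_g(x)|\,\dtext x = \int_{g(E_n)}\eta(y)\,\dtext y$.

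Summation now closes the argument. Since $\{E_n\}$ is disjoint and $g$ is injective, the images $\{g(E_n)\}$ are likewise disjoint subsets of $g(\V)$, so by monotone convergence
\[
\int_\V \eta(g(x))\,|J_g(x)|\,\dtext x \;=\; \sum_n \int_{g(E_n)}\eta(y)\,\dtext y \;\le\; \int_{g(\V)}\eta(y)\,\dtext y,
\]
which is exactly~\eqref{eq:cvf}. The delicate step is the first one: the conclusion must hold for \emph{all} of $\V$, not merely for a set of full measure, so a Lipschitz decomposition based only on a.e.\ differentiability would leave a remainder unaccounted for. The threshold definition of $\V_k$ uses precisely the linear bound supplied by the existence of $Dg(x)$ at every point of $\V$, and this is what guarantees $\V \subset \bigcup_k \V_k$. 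Once the decomposition is in hand, the remainder---Kirszbraun extension and the standard Lipschitz area formula---is routine.
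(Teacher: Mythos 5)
Your proof is correct and follows essentially the same route as the paper, which simply cites Federer's decomposition theorem (Theorem 3.1.8) together with the Lipschitz area formula: your sets $\V_k$ and the ball refinement reproduce exactly that countable Lipschitz decomposition, after which Kirszbraun extension, the a.e.\ identification $DG_n=Dg$ at density points, and injectivity (multiplicity one) give the inequality by summation. In effect you have written out a self-contained proof of the cited ingredients, so there is nothing to correct.
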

This follows from~\cite[Theorem 3.1.8]{Federer} together with the area formula for Lipschitz mappings.

The circle is uniquely characterized by the property that among all  closed Jordan curves of given length $L$, the circle of circumference $L$  encloses maximum area. This property is  expressed in the well-known  isoperimetric inequality.
\begin{lemma}\label{lem:isoperimetric}
Suppose $\mathbb U$ is a bounded Jordan domain with rectifiable boundary $\partial \mathbb U$. Then
\begin{equation}
\abs{\mathbb U} \le \frac{1}{4 \pi} [\ell (\partial \mathbb U)]^2
\end{equation}
where $\abs{\mathbb U}$ is the area of $\mathbb U$ and $\ell (\partial \mathbb U)$ is the length of $\partial \mathbb U$.
\end{lemma}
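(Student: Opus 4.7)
The plan is to prove this classical isoperimetric inequality via the Hurwitz-style Fourier series argument, which accommodates merely rectifiable boundaries after an arclength reparametrization. The alternative routes (symmetrization, Brunn--Minkowski, or conformal mapping combined with Hardy spaces) would require more setup than what the excerpt prepares, so Hurwitz's proof is the cleanest fit.

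First I would parametrize $\partial \mathbb U$ by arclength. Since $\partial \mathbb U$ is rectifiable of length $L = \ell(\partial \mathbb U)$, this produces a Lipschitz map $\gamma \colon [0,L] \to \mathbb{C}$, $\gamma(s) = x(s) + i y(s)$, with $|\gamma'(s)| = 1$ almost everywhere. Extending $x$ and $y$ to $L$-periodic functions on $\mathbb{R}$, I would invoke Green's theorem, whose validity for rectifiable Jordan boundaries is classical, to express the enclosed area as
\begin{equation*}
|\mathbb U| \;=\; \frac{1}{2}\left|\int_0^L \bigl( x(s)\, y'(s) - y(s)\, x'(s)\bigr)\,\dtext s\right|.
\end{equation*}

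Next I would expand $x$ and $y$ in complex Fourier series on $[0,L]$, writing $x(s) = \sum_{n\in\mathbb{Z}} \alpha_n e^{2\pi i n s / L}$ and $y(s) = \sum_{n\in\mathbb{Z}} \beta_n e^{2\pi i n s / L}$, where $\alpha_{-n} = \overline{\alpha_n}$ and $\beta_{-n} = \overline{\beta_n}$ by reality. Because $\gamma$ is Lipschitz, $x', y' \in L^2[0,L]$, so Parseval's identity applied to the unit-speed constraint $\int_0^L (x'^2 + y'^2)\,\dtext s = L$ yields
\begin{equation*}
\sum_{n\in\mathbb Z} n^2 \bigl( |\alpha_n|^2 + |\beta_n|^2 \bigr) \;=\; \frac{L^2}{4\pi^2}.
\end{equation*}
An analogous computation shows that the line integral above equals $-4\pi\sum_{n\in\mathbb Z} n\,\im(\overline{\alpha_n}\beta_n)$. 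The elementary estimates $|\im(\overline{\alpha_n}\beta_n)| \le \tfrac12(|\alpha_n|^2 + |\beta_n|^2)$ combined with $|n| \le n^2$ for $|n| \ge 1$ then produce $|\mathbb U| \le \tfrac{1}{2}\cdot\tfrac{L^2}{2\pi} = \tfrac{L^2}{4\pi}$, as desired.

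The main technical point, and essentially the only nontrivial one, is the use of Green's theorem for a Jordan curve that is only rectifiable. One can either appeal to the classical result that a rectifiable Jordan curve bounds a measurable domain whose area is given by the line integral $\tfrac12\oint (x\,\dtext y - y\,\dtext x)$ (a consequence of the theory of functions of bounded variation and the change-of-variables formula), or reduce to smooth curves by an approximation procedure: smooth Jordan curves $\gamma_k$ obtained by mollifying the arclength parametrization converge uniformly to $\gamma$, their enclosed areas converge to $|\mathbb U|$, and their lengths converge from below to $L$ by the lower semicontinuity of length, whereupon the smooth case yields the desired bound in the limit.
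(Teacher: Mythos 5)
The paper does not prove this lemma at all: it is quoted as the classical isoperimetric inequality and used as a black box, so there is no in-paper argument to compare against, and your task was to supply one. Your Hurwitz-type Fourier proof is essentially correct: with the arclength parametrization Lipschitz and $|\gamma'|=1$ a.e., one has $x',y'\in L^2[0,L]$, Parseval gives $\sum_{n}n^2(|\alpha_n|^2+|\beta_n|^2)=L^2/4\pi^2$, the signed area equals (up to sign) $2\pi\sum_n n\,\im(\overline{\alpha_n}\beta_n)$, and the bounds $|\im(\overline{\alpha_n}\beta_n)|\le\tfrac12(|\alpha_n|^2+|\beta_n|^2)$ and $|n|\le n^2$ yield $|\mathbb U|\le L^2/4\pi$. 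As you correctly identify, the only delicate point is equating the geometric area $|\mathbb U|$ with $\tfrac12\bigl|\int_0^L(xy'-yx')\,\dtext s\bigr|$ for a merely rectifiable Jordan curve; your first option (the classical extension of Green's theorem to rectifiable Jordan boundaries, equivalently the fact that such a curve bounds a set of finite perimeter for which the line-integral area formula holds) is the right way to close this and makes the proof complete. Your mollification fallback, however, is weaker than you suggest: mollified closed curves need not be simple, so their ``enclosed areas'' must be read as signed areas, and proving that these converge to $\pm|\mathbb U|$ already presupposes the rectifiable Green's theorem you were trying to avoid; moreover the length control you actually need is $\ell(\gamma_k)\le L$ (which follows from Jensen's inequality for the averaged parametrization), whereas lower semicontinuity of length gives the inequality in the opposite direction. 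Since the Fourier argument only needs $x',y'\in L^2$, no smoothing is required in the first place, so I would drop the approximation route and rest the proof on the cited rectifiable Green's theorem.
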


\section{Reflection}\label{sec:ref}
We denote the one point compactification of the complex plane by $\widehat\C \bydef \C\cup\{\fz\}$.
\begin{definition}
A domain $\Omega \subset \widehat\C$ admits a {\it reflection in its boundary} $\partial \Omega$ if  there exists a
homeomorphism $g$ of $\widehat\C$ such that
\begin{itemize}
\item $g(\Omega) = \widehat\C \setminus \overline{\Omega}$, and
\item $g(z)=z$ for $z\in \partial \Omega$.
\end{itemize}
\end{definition}
A domain $\Omega \subset  \widehat\C$ is a Jordan domain if and only if it admits a reflection in its boundary, see~\cite{GeHag1}. In this section we raise a question what else can we say about the reflection if the domain is an $\mathscr L^p$-quasidisk.  A classical result of K\"uhnau~\cite{Ku} tells us that $\Omega \subset  \widehat\C$  is a qusidisk if and only if it
admits a qusiconformal reflection in $\partial \Omega$.
  Let $\X \subset \C$ be an $\mathscr L^p$-quasidisk. Then there exists a homeomorphism $f \colon \C \onto \C$ such that $f(\X)= \mathbb D$. We extend $f$  by setting $f(\infty)=\infty$ and still denote the extended mapping by $f$. This way we obtain a homeomorphism $f\colon \widehat {\C} \onto \widehat{\C}$.
We also denote its inverse  by $h \colon \widehat {\C} \onto \widehat{\C}$.

  The circle inversion map $\Psi \colon \widehat {\C} \onto \widehat{\C}$,
  \[\Psi (z) \bydef \begin{cases} \frac{z}{\abs{z}^2} \quad & \textnormal{ if } z \not=0  \\
  \infty & \textnormal{ if } z =0 \end{cases} \]
   is anticonformal, which means that at every point it preserves angles and reverses orientation. The circle inversion  defines  a reflection in $\partial \X$ by the rule
\begin{equation}\label{eq:reflection}
g \colon \widehat {\C} \onto \widehat{\C} \qquad g(x) \bydef h \circ \Psi \circ f (x) \, .
\end{equation}
\begin{theorem}\label{thm:reflection}
Let $\X$ be an $\mathscr L^p$-quasidisk  and $g$ the reflection in $\partial \X$ given by~\eqref{eq:reflection}. Then for a bounded domain $\U \subset \C$ such that $h(0)\not \in \overline{\U}$ we have $g \in \W^{1,1} (\U, \C)$ and
\begin{equation}\label{doudis}
\int_\U\frac{|Dg(x)|^{p}}{|J_g(x)|^{\frac{p-1}{2}}}\, \dtext x\le \lf(\int_{g(\U)}K^p_f(x)\, \dtext x\r)^{\frac{1}{2}}\cdot\lf(\int_\U K^p_f(x)\, \dtext x\r)^{\frac{1}{2}} \, .
\end{equation}
\end{theorem}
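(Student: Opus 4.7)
My plan is to establish a pointwise chain-rule identity for $Dg$ at almost every $x\in\U$, combine it with the distortion inequalities for $f$ and its inverse $h$ and the conformality of $\Psi$ to obtain the clean pointwise bound $|Dg(x)|^2/|J_g(x)|\le K_f(g(x))\,K_f(x)$, and then conclude via Cauchy--Schwarz and the area formula of Lemma~\ref{lem:co-area}.

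The hypothesis $h(0)\notin\overline{\U}$ forces $f(\overline{\U})$ to be a compact subset of $\C\setminus\{0\}$, so $|f|$ is bounded above and uniformly away from zero on $\U$ and $\Psi$ is a smooth diffeomorphism on a neighborhood of $f(\overline{\U})$. Lemma~\ref{lem:differen} gives the differentiability of $f$ almost everywhere, and Lemma~\ref{lem:intdist} provides $h\in\W^{1,2}_{\textnormal{loc}}(\C,\C)$ together with $J_f>0$ a.e. Since $h$ is a continuous $\W^{1,2}$-mapping of the plane, it satisfies Luzin's condition (N); hence if $N$ is the exceptional null set on which $h$ fails to be differentiable, then $|h(\Psi(N))|=0$, which means $h$ is differentiable at $\Psi(f(x))$ for almost every $x\in\U$. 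Combining this with Lemma~\ref{lem:inverse}, at a.e.\ $x\in\U$ the chain rule produces
$$Dg(x)=[Df(g(x))]^{-1}\,D\Psi(f(x))\,Df(x).$$

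At such an $x$, inversion preserves the matrix distortion, so $|[Df(g(x))]^{-1}|^2\le K_f(g(x))/J_f(g(x))$; together with $|D\Psi(f(x))|^2=|J_\Psi(f(x))|=|f(x)|^{-4}$, $|Df(x)|^2\le K_f(x)\,J_f(x)$, and sub-multiplicativity of the operator norm, this yields
$$|Dg(x)|^2\le\frac{K_f(g(x))\,K_f(x)\,J_f(x)}{J_f(g(x))\,|f(x)|^4},\qquad |J_g(x)|=\frac{J_f(x)}{J_f(g(x))\,|f(x)|^4},$$
whose ratio is the announced bound $|Dg(x)|^2/|J_g(x)|\le K_f(g(x))\,K_f(x)$. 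Rewriting $|Dg|^p/|J_g|^{(p-1)/2}=(|Dg|^2/|J_g|)^{p/2}|J_g|^{1/2}$, integrating over $\U$, and applying Cauchy--Schwarz, pairing $K_f(g(x))^{p/2}|J_g(x)|^{1/2}$ against $K_f(x)^{p/2}$, splits the right side into the product of $\bigl(\int_\U K_f(g(x))^p|J_g(x)|\,\dtext x\bigr)^{1/2}$ and $\bigl(\int_\U K_f(x)^p\,\dtext x\bigr)^{1/2}$. Lemma~\ref{lem:co-area} applied to the homeomorphism $g$ with the Borel function $\eta=K_f^p$ transforms the first factor into $\int_{g(\U)}K_f(y)^p\,\dtext y$, giving~\eqref{doudis}. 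The Sobolev regularity $g\in\W^{1,1}(\U,\C)$ then follows from the $p=1$ case of the pointwise bound together with Cauchy--Schwarz and the area formula, which force $|Dg|\in L^1(\U)$, combined with absolute continuity on lines inherited from $f\in\W^{1,1}_{\textnormal{loc}}$, the smoothness of $\Psi$ away from $0$, and $h\in\W^{1,2}_{\textnormal{loc}}$.

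The main obstacle is the rigorous justification of the chain rule at a.e.\ point, because $h$ is only a.e.\ differentiable and one must rule out that the pullback $\{x\in\U:\Psi(f(x))\in N\}$ has positive measure; this is precisely where Luzin's condition (N) for the $\W^{1,2}$-mapping $h$ is indispensable. Once the chain rule is in place, the remainder of the argument reduces to a purely algebraic identity together with a single application of the area formula.
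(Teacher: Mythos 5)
Your overall strategy coincides with the paper's: the chain rule for $g=h\circ\Psi\circ f$ at almost every point, the (anti)conformality of $\Psi$, Cauchy--Schwarz, and the area formula of Lemma~\ref{lem:co-area}. The only structural difference is that you change variables once (via $g$, after first deriving a pointwise distortion bound), while the paper changes variables twice (via $\tilde f=\Psi\circ f$, then via $h$); that is a harmless repackaging. You also correctly isolate a point the paper passes over, namely that the chain rule needs $h$ to be differentiable at $\Psi(f(x))$ for a.e.\ $x\in\U$, and invoking Luzin's condition (N) for the planar $\W^{1,2}_{\loc}$-homeomorphism $h$ is a legitimate (though external, not-in-the-paper) way to secure exactly that.

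The gap comes in the next step. From differentiability of $h$ at $\Psi(f(x))$ you pass to $Dh(\Psi(f(x)))=[Df(g(x))]^{-1}$, to $|[Df(g(x))]^{-1}|^2\le K_f(g(x))/J_f(g(x))$, and to $|J_g(x)|=J_f(x)/\bigl(J_f(g(x))\,|f(x)|^4\bigr)$. All of this presupposes that $f$ is differentiable at the reflected point $g(x)$ with $J_f(g(x))>0$ for a.e.\ $x\in\U$ --- that is what Lemma~\ref{lem:inverse} requires --- and condition (N) for $h$ does not deliver it. The bad set is $\{x\in\U\colon g(x)\in N\}=g(N)\cap\U=h(\Psi(f(N)))\cap\U$, where $N$ is the null set on which $f$ fails to be differentiable or $J_f=0$; since $f$ is merely a $\W^{1,1}_{\loc}$ homeomorphism, $f(N)$ may have positive measure, so $h(\Psi(f(N)))$ need not be null and nothing in the paper's toolkit controls it. Hence your pointwise bound $|Dg(x)|^2/|J_g(x)|\le K_f(g(x))\,K_f(x)$ is justified only at points whose reflection lands in the good set of $f$. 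The repair is to keep the middle factor abstract --- bound $|Dg(x)|^2/|J_g(x)|$ by $\bigl(|Dh(\tilde f(x))|^2/|J_h(\tilde f(x))|\bigr)K_f(x)$ --- run Cauchy--Schwarz and the area formula first, and only after landing on the image side identify $Dh(f(z))=(Df(z))^{-1}$ via Lemma~\ref{lem:inverse} and Cramer's rule; the identification then only needs to hold at a.e.\ $z\in g(\U)$, where Lemmas~\ref{lem:differen} and~\ref{lem:intdist} apply. This is precisely how the paper orders its two applications of Lemma~\ref{lem:co-area} (strictly, one also uses that $Dh$ vanishes a.e.\ on $f(N)$, i.e.\ that the inverse has finite distortion, a fact contained in the sources cited for Lemma~\ref{lem:intdist}). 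Alternatively, quote directly that the inverse of a planar homeomorphism of integrable distortion has finite distortion with $K_h(y)=K_f(h(y))$ a.e.; with that, your argument goes through after replacing $K_f(g(x))$ by $K_h(\Psi(f(x)))$ and transferring it to $K_f$ only after the change of variables.
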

\begin{proof}
Let $\U$ be a bounded domain in $\C$ such that $h(0) \not \in \overline{ \U}$. For $x\in \U$ we denote
\[\tilde{f} (x) \bydef \Psi \circ f (x)  \quad \textnormal{and} \quad  \tilde{h} (y) \bydef (\tilde{f})^{-1} (y) \, . \]
We write
\[\V \bydef \{x\in \U \colon f \textnormal{ is differentiable at } x \; \textnormal{ and } \;  J_f(x)>0  \} \, . \]
Then by Lemma~\ref{lem:differen} and Lemma~\ref{lem:intdist} we obtain $\abs{\V}= \abs{\U}$.

Fix $x\in \V$. Then $\tilde{f}$ is differentiable at $x$. Furthermore,  $h$ is differentiable at $f(x)$, see Lemma~\ref{lem:inverse}. Therefore, for $x\in \V$ the chain rule gives
\begin{equation}
\abs{Dg(x )}\le \abs{Dh(\tilde f(x))}  \, \abs{D\tilde f(x)}\ \textnormal{ and }   J_g(x)=J_h(\tilde f(x)) J_{\tilde f}(x)\, .
\end{equation}
Hence, applying H\"older's inequality we have
\begin{equation}\label{eq:main78}
\begin{split}
\int_\U \frac{\abs{Dg(x)}^p}{\abs{J_g (x)}^\frac{p-1}{2}} \, \dtext x & =  \int_\V \frac{\abs{Dg(x)}^p}{\abs{J_g (x)}^\frac{p-1}{2}} \, \dtext x \\
&\le \int_\V \frac{|Dh(\tilde f(x))|^{p}}{|J_h(\tilde f(x))|^{\frac{p-1}{2}}}\frac{|D\tilde f(x)|^{p}}{|J_{\tilde f}(x)|^{\frac{p-1}{2}}}\,  \dtext x \\
&\le\lf(\int_\V \frac{|Dh(\tilde f(x))|^{2p}}{|J_h(\tilde f(x))|^{p-1}}|J_{\tilde f}(x)|\, \dtext x\r)^{\frac{1}{2}}\\
                                                                  & \; \;  \cdot\lf(\int_\V \frac{|D\tilde f(x)|^{2p}}{|J_{\tilde f}(x)|^p}\, \dtext x\r)^{\frac{1}{2}}.
\end{split}
\end{equation}
According to Lemma~\ref{lem:co-area} we obtain
\begin{equation}\label{eq:78}\int_\V \frac{|Dh(\tilde f(x))|^{2p}}{|J_h(\tilde f(x))|^{p-1}}|J_{\tilde f}(x)| \dtext x \le \int_{\tilde f(\V)} \frac{\abs{Dh(y)}^{2p}}{[J_h(y)]^{p-1}} \, \dtext y  \, . \end{equation}
Applying Lemma~\ref{lem:co-area} again this time for $h$, we have
\[  \int_{\tilde f(\V)} \frac{\abs{Dh(y)}^{2p}}{[J_h(y)]^{p}} J_h(y) \, \dtext y \le \int_{g(\V)} [Dh \big( f(x)\big )]^{2p}  [J_f (x)]^p \, \dtext x \, . \]
This together with Lemma~\ref{lem:inverse} gives
\[  \int_{\tilde f(\V)} \frac{\abs{Dh(y)}^{2p}}{[J_h(y)]^{p}} J_h(y) \, \dtext y \le   \int_{g(\V)} \left[(Df(x))^{-1}\right]^{2p}  [J_f (x)]^p \, \dtext x \, . \]
The familiar Cramer's rule implies
\begin{equation}\label{eq:785}
\int_{g(\V)} \left[(Df(x))^{-1}\right]^{2p}  [J_f (x)]^p \, \dtext x =  \int_{g(\V)} \frac{\abs{Df(x)}^{2p}}{[J_f (x)]^p} \, .
\end{equation}
Combining the estimate~\eqref{eq:78} with~\eqref{eq:785} we have
\begin{equation}\label{eq:7855}
\int_\V \frac{|Dh(\tilde f(x))|^{2p}}{|J_h(\tilde f(x))|^{p-1}}|J_{\tilde f}(x)| \dtext x \le \int_{g(\U)} K_f^p (x) \, \dtext x \, .
\end{equation}
Estimating the second term on the right hand side of~\eqref{eq:main78} we simply note that $\abs{D \Psi (z)}^2 = J(z, \Psi)$ for $z \in \C \setminus \{0\}$ and so
\begin{equation}\label{eq:78551}
\int_\V \frac{|D\tilde f(x)|^{2p}}{|J_{\tilde f}(x)|^p}\, \dtext x = \int_\V K^p_f(x)\, \dtext x \le  \int_{\U} K^p_f(x)\, \dtext x\, .
\end{equation}
The claim follows from~\eqref{eq:main78},~\eqref{eq:7855} and~\eqref{eq:78551}.
\end{proof}

\section{Proof of Theorem \ref{theorem2}}
The proof is based on a  Sobolev variant of the Jordan-Sch\"onflies theorem.
\begin{lemma}\label{lem:extension}
Let $\X$ and $\Y$ be bounded simply connected  Jordan domains, $\D\Y$ being rectifiable. A boundary homeomorphism $\phi \colon \D\X\onto \D\Y$ satisfying \begin{equation}\label{inverse}
\int_{\D\Y}\int_{\D\Y}\lf|\log|\phi^{-1}(\xi)-\phi^{-1}(\eta)|\r| |\dtext \xi| |\dtext \eta|<\fz
\end{equation}
 admits a homeomorphic extension $h\colon \C\to\C$ of Sobolev class $\W^{1,2}_{\loc}(\C,\C)$.
\end{lemma}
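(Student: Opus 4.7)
\emph{Plan of proof.} The idea is to build a Sobolev Jordan–Schönflies-type extension by constructing $h$ on $\X$ and on $\C\setminus\overline\X$ separately, both via Riemann maps combined with a harmonic homeomorphism of the unit disk, and then gluing the two pieces along $\partial\X$.

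\emph{Interior extension.} Let $R_\X\colon\mathbb D\to\X$ and $R_\Y\colon\mathbb D\to\Y$ be Riemann maps; by Carath\'eodory's theorem they extend to homeomorphisms of the closed disks, and rectifiability of $\partial\Y$ combined with the F.\ and M.\ Riesz theorem gives $R_\Y'\in H^1(\mathbb D)$, so $R_\Y|_{\partial\mathbb D}$ is absolutely continuous and $|R_\Y'(e^{i\theta})|\,d\theta$ transfers to arc-length on $\partial\Y$. Set $\gamma:=R_\Y^{-1}\circ\phi\circ R_\X\colon\partial\mathbb D\to\partial\mathbb D$ and let $U\colon\mathbb D\to\mathbb D$ be its Poisson harmonic extension; by the Rad\'o–Kneser–Choquet theorem (convex target) $U$ is a homeomorphism. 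Define $h|_\X:=R_\Y\circ U\circ R_\X^{-1}\colon\X\onto\Y$, a homeomorphism extending $\phi$.

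\emph{Sobolev estimate via a dual Douglas argument.} Conformal invariance of the Dirichlet integral on the domain side together with the area formula (Lemma~\ref{lem:co-area}) gives
\[ \int_\X |Dh|^2\,dx \;=\; \int_{\mathbb D}|R_\Y'(U(\zeta))|^2|DU(\zeta)|^2\,d\zeta \;=\; \int_\Y K_{h^{-1}}(y)\,dy, \]
and by Lemma~\ref{lem:intdist} finiteness of this is equivalent to $h\in\W^{1,2}(\X,\Y)$. To control the right-hand integral, introduce the companion Poisson harmonic extension $U'\colon\mathbb D\to\mathbb D$ of the inverse boundary data $\gamma^{-1}=R_\X^{-1}\circ\phi^{-1}\circ R_\Y$ (again a homeomorphism by Rad\'o–Kneser–Choquet). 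By conformal invariance and the Douglas formula,
\[ \int_{\mathbb D}|\nabla U'|^2\,d\zeta \;=\;\frac{1}{4\pi}\int_0^{2\pi}\!\int_0^{2\pi}\frac{\bigl|R_\X^{-1}\!\circ\phi^{-1}\!\circ R_\Y(e^{i\theta})-R_\X^{-1}\!\circ\phi^{-1}\!\circ R_\Y(e^{i\sigma})\bigr|^2}{\sin^2\tfrac{\theta-\sigma}{2}}\,d\theta\,d\sigma. \]
Now bound the numerator using the Pommerenke–Warschawski modulus-of-continuity estimate $|R_\X^{-1}(a)-R_\X^{-1}(b)|^2\lesssim 1/|\log|a-b||$, valid for the boundary Riemann map of any bounded Jordan domain via the length-area/capacity principle, and change variables $\xi=R_\Y(e^{i\theta})$, $\eta=R_\Y(e^{i\sigma})$ with the absolutely continuous density $|R_\Y'|\,d\theta$ turning into arc-length on $\partial\Y$. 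The resulting double integral is dominated by a constant multiple of $\int_{\partial\Y}\int_{\partial\Y}|\log|\phi^{-1}(\xi)-\phi^{-1}(\eta)||\,|d\xi||d\eta|$, finite by hypothesis~\eqref{inverse}. Combined with the energy–distortion duality of Lemma~\ref{lem:intdist}, this yields $\int_\X|Dh|^2<\infty$.

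\emph{Exterior piece and gluing.} A completely symmetric construction using the exterior Riemann maps $\widehat{\mathbb C}\setminus\overline\X\to\widehat{\mathbb C}\setminus\overline{\mathbb D}$ and $\widehat{\mathbb C}\setminus\overline\Y\to\widehat{\mathbb C}\setminus\overline{\mathbb D}$ (normalized to fix $\infty$) gives a $\W^{1,2}$-homeomorphic extension on $\mathbb C\setminus\overline\X$ with the same logarithmic control. Both pieces are continuous across $\partial\X$ (agreeing with $\phi$ there), and since $\partial\X$ is a Lebesgue-negligible Jordan curve and each piece is $\W^{1,2}$ on its open domain, the ACL characterization yields $h\in\W^{1,2}_{\loc}(\mathbb C,\mathbb C)$.

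\emph{Main obstacle.} The technical heart is converting the Douglas kernel $1/\sin^2((\theta-\sigma)/2)$ into the logarithmic kernel of~\eqref{inverse}. This relies on the sharp Pommerenke–Warschawski bound $|R_\X^{-1}(a)-R_\X^{-1}(b)|^2\lesssim 1/|\log|a-b||$ for the inverse Riemann map of an arbitrary bounded Jordan domain (no extra regularity assumed on $\partial\X$) and on a Fubini-type exchange that depends essentially on $|R_\Y'|\in L^1(\partial\mathbb D)$ from the F.\ and M.\ Riesz theorem; a secondary delicate point is the precise passage from finiteness of the Douglas energy of $U'$ to that of the Dirichlet integral of $h$ itself through the distortion–energy duality.
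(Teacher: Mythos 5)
The paper offers no proof of this lemma: it is quoted directly from \cite[Theorem 1.6]{KO}, so your argument has to stand on its own, and as written it does not. Your construction of the homeomorphism (Riemann maps plus the Rad\'o--Kneser--Choquet extension $U$ of $\gamma=R_\Y^{-1}\circ\phi\circ R_\X$, then $h|_\X=R_\Y\circ U\circ R_\X^{-1}$) is fine, and so is the reduction of the problem to bounding $\int_{\mathbb D}|R_\Y'(U(\zeta))|^2|DU(\zeta)|^2\,\dtext\zeta$. The decisive gap is the step that is supposed to produce this bound. You introduce the harmonic extension $U'$ of the \emph{inverse} boundary data $\gamma^{-1}$ and estimate its Douglas energy; but $U'$ is not $U^{-1}$ (the inverse of a harmonic homeomorphism is in general not harmonic), and Lemma~\ref{lem:intdist} is an identity between a homeomorphism and \emph{its own} inverse. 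Nothing in your argument converts finiteness of $\int_{\mathbb D}|\nabla U'|^2$ into control of $\int_\Y K_{h^{-1}}(y)\,\dtext y=\int_{\mathbb D}|R_\Y'(U)|^2|DU|^2\,\dtext\zeta$, so the ``distortion--energy duality'' you invoke never connects the object you estimated to the object you need.

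Two supporting steps are also false or unjustified on their own terms. First, there is no universal estimate $|R_\X^{-1}(a)-R_\X^{-1}(b)|^2\lesssim 1/\big|\log|a-b|\big|$ for an arbitrary bounded Jordan domain: take $\X$ shaped like a horseshoe with a narrow neck and let $a,b\in\partial\X$ face each other across the neck; then $|a-b|$ is arbitrarily small while both boundary subarcs joining $a$ to $b$ have diameter bounded below, so $|R_\X^{-1}(a)-R_\X^{-1}(b)|$ stays bounded away from zero. Estimates of this kind require an Ahlfors-type control of the smaller subarc by $|a-b|$ --- precisely the property the domains of this paper lack, so no such ``free'' regularity of $R_\X^{-1}$ can be assumed. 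Second, even granting that bound, the Douglas kernel $\sin^{-2}\frac{\theta-\sigma}{2}$ is non-integrable on the diagonal and a logarithmic gain cannot make the double integral finite, let alone ``dominated by a constant multiple'' of $\int_{\partial\Y}\int_{\partial\Y}\big|\log|\phi^{-1}(\xi)-\phi^{-1}(\eta)|\big|\,|\dtext\xi||\dtext\eta|$; moreover the change of variables to arc length on $\partial\Y$ introduces the factor $\big(|R_\Y'(e^{i\theta})||R_\Y'(e^{i\sigma})|\big)^{-1}$, and $|R_\Y'|$ is only in $L^1(\partial\mathbb D)$ with no positive lower bound, so the asserted comparison of kernels has no justification. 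A smaller but genuine further issue: in the gluing you declare $\partial\X$ Lebesgue-negligible, yet a general Jordan curve may have positive area, so the ACL argument needs more care for arbitrary $\X$. The paper's own remark that there is an inner chordarc $\Y$ and a boundary homeomorphism satisfying the Douglas condition with no finite-energy homeomorphic extension is a warning that the harmonic-extension route must use \eqref{inverse} in an essential quantitative way; your proof never does, which is presumably why the authors cite \cite[Theorem 1.6]{KO} rather than argue along these lines.
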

This result is from \cite[Theorem 1.6]{KO}. Note that if one asks the existence of homeomorphic extension  $h \colon \overline{\X} \onto \overline{\Y}$ (on one side of $\partial \X$) in the Sobolev class $\W^{1,2} (\X, \C)$. First, applying the Riemann Mapping Theorem we may assume that $\X=\mathbb D$. Second, a necessary condition is that the mapping $\phi$ is the Sobolev trace of some (possibly non-homeomorphic) mapping in $\W^{1,2} (\X, \C)$. The class of boundary functions which admit a harmonic extension with finite Dirichlet energy was characterized by Douglas~\cite{Do}. The Douglas condition for a function  $\phi \colon \D\mathbb D \onto \D\Y$ reads as
\begin{equation}\label{Douglas}
\int_{\D\mathbb D}\int_{\D\dd}\left|\frac{\phi(\xi)-\phi(\eta)}{\xi-\eta}\right|^2|\dtext \xi||\dtext \eta|<\fz.
\end{equation}
 In \cite{AIMO} it was shown that for $\mathscr C^1$-smooth $\Y$ the Douglas condition~\eqref{Douglas} can be equivalently given in terms of the inverse mapping $\phi^{-1} \colon \D\Y\onto \D\dd$ by~\eqref{inverse}. Beyond the $\mathscr C^1$-smooth domains, if $\Y$ is a Lipschitz regular, then a boundary homeomorphism $\phi \colon \D\mathbb D \onto \D\Y$ admits a homeomorphic extension $h \colon \overline{\mathbb D} \onto \overline{\Y}$ in $\W^{1,2}(\mathbb D, \C) $ if and only if $\phi$ satisfies the Douglas condition. There is, however, an inner chordarc domain $\Y$ and a homeomorphism $\phi \colon \partial \mathbb D \onto \partial \Y$
satisfying the Douglas condition which does not admit a homeomorphic extension $h \colon \overline{\mathbb D} \onto \overline{\Y}$ with finite Dirichlet energy.
Recall that $\Y$ is an  inner chordarc domain if there exists a homeomorphism $\Upsilon \colon \overline{\Y} \onto \overline{\mathbb D}$  which is $\mathscr C^1$-diffeomorphic in $\Y$ with bounded gradient matrices $D \Upsilon$ and $(D \Upsilon )^{-1}$. These and more about Sobolev homeomorphic extension results we refer to~\cite{KO}.

\begin{proof}[Proof of Theorem \ref{theorem2}]
Let $\X\subset\C$ be a simply connected Jordan domain, $\D\X$ being rectifiable. According to Lemma~\ref{lem:intdist}, $\X$ is an $\mathscr L^1$-quasidisk if and only if there exists a homeomorphism $h \colon \C \onto \C$ in $\W^{1,2}_{\loc} (\C, \C)$ such that $h(\mathbb D)=\X$. Therefore, by Lemma~\ref{lem:extension} it suffices to construct a boundary homeomorphism $\phi \colon \D\dd\onto \D\X$ which satisfies
\[
\int_{\D\X}\int_{\D\X}\lf|\log|\phi^{-1}(\xi)-\phi^{-1}(\eta)|\r| |\dtext \xi| |\dtext \eta|<\fz \, .
\]

Let $\xi, \eta \in\D\X$ be arbitrary. We denote by  $\gamma_{\xi\eta}$  the subcurve of $\D\X$, connecting    $\xi$  and  $\eta$.   The curve  $\gamma_{\xi\eta}  $ is parametrized counterclockwise. Setting $z_\xi=1$. For arbitrary $z\in\D\dd$ let $\wideparen{z_\xi z}\subset\D\dd$ be the circular arc starting from $z_\xi$ ending at $z$. The arc is parametrized counterclockwise.  For $\eta\in\D\X$, there exists a unique $z_\eta\in\D\dd$ with
\begin{equation}
\frac{\ell (\gamma_{\xi\eta})}{ \ell (\D\X)}=\frac{\ell (\wideparen{z_\xi z_\eta})}{\ell (\D\dd)}.\nonumber
\end{equation}
Now,  we define the boundary homeomorphism $\phi:\D\dd\to\D\X$ by setting $\phi(z_\eta)=\eta$.
\begin{figure}[htbp]
\centering
\includegraphics[width=0.9\textwidth]
{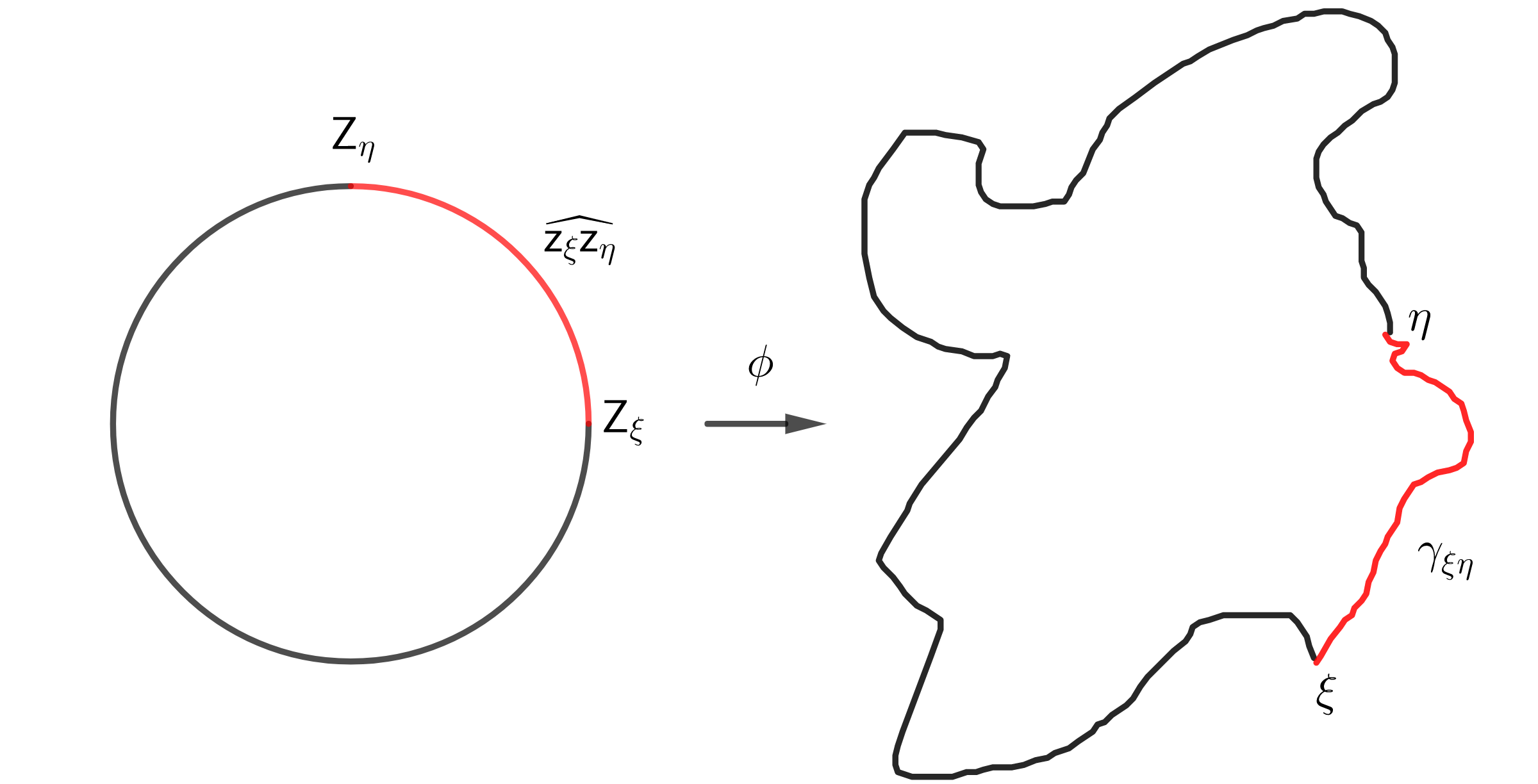}
\end{figure}

First, we observe that  $\abs{\phi'(z)}=\frac{\ell (\D\X)}{\ell (\D\dd)}$ for every $z\in\D\dd$. Furthermore since  the length of the shorter circular arc between two points in $\D\dd$ is comparable to their Euclidean distance   the change of variables formula gives
\begin{eqnarray}
\int_{\D\X}\abs{\log\abs{\phi^{-1}(\xi)-\phi^{-1}(\eta)}}\, \abs{\dtext \eta}&\le&C\int_{\D\dd}\abs{\log\abs{\phi^{-1}(\xi)-\phi^{-1}(\eta)}}\, \abs{\dtext \phi^{-1}(\eta)}\nonumber\\
                                                                                              &\le&C\int_0^{2\pi}\abs{\log t}\, \dtext t<\fz.\nonumber
\end{eqnarray}
\end{proof}

\section{Proof of Theorem \ref{thm:main}}
Before jumping into the proof we fix a few notation and prove two auxiliary results. 
Fix a  power-type inward cusp domain $\dd^\prec_\beta$. For $0<t<1$ we write
\begin{equation}
\mathbb I_t \bydef \{t+iy\in\C \colon 0\le|y|<t^\beta\}\nonumber
\end{equation}
and
\begin{equation}
\U_t \bydef \{x+iy\in\C \colon 0<x<t \textnormal{ and } 0\le|y|<x^\beta\}.\nonumber
\end{equation}
The area of $\U_t$ is given by
\[\abs{\U_t}=\int_0^t\int_{-s^\beta}^{s^\beta}1\, \dtext y \, \dtext s=\frac{2t^{\beta +1}}{\beta+1}\, .\]

Suppose the cusp domain $\dd^\prec_\beta$ is an $\mathscr L^s$-quasidisk for $1\le s <\infty$. Note that according to Theorem~\ref{theorem2} the domain $\dd^\prec_\beta$  is always
 an $\mathscr L^1$-quasidisk for every  $\beta$. Therefore, there exists a homeomorphism $f \colon \C \onto \C$ of $\mathscr L^1$-distortion such that $f(\dd^\prec_\beta)= \mathbb D$. We denote the inverse of $f$ by $h \colon \C \onto \C$. After  first extending the homeomorphisms $f$ and $h$ by $f(\infty)=\infty =h(\infty)$ we define a homeomorphism  $g\colon \widehat{\C} \onto \widehat{\C}$ by the formula~\eqref{eq:reflection}.  The mapping $g$ gives a reflection in the boundary of $\dd^\prec_\beta$; that is,
\begin{itemize}
\item $g (\dd^\prec_\beta) =  \widehat{\C} \setminus \overline{\dd^\prec_\beta}$,
\item $g( \widehat{\C} \setminus \overline{\dd^\prec_\beta}) = \dd^\prec_\beta $ and 
\item $g(x)=x$ for $x\in \partial \dd^\prec_\beta$.
\end{itemize}

 \begin{lemma}\label{lem:epsilon}
Let $\epsilon_n=2^{-n}$ for $n \in \mathbb N$. Then there exists  a subsequence $\{\epsilon_{n_k}\}$ of  $\{\epsilon_n\}$ such that for every $k \in \mathbb N$ we have either
\begin{itemize}
\item    $\abs{g(\U_{\epsilon_{n_k}})} \le \epsilon_{n_k}^2$ or
\item  $\abs{g(\U_{\epsilon_{n_k}})} \le 5 \abs{g(\U_{\epsilon_{n_{k}+1}})} $ and $\abs{g(\U_{\epsilon_{n_k}})} > \epsilon_{n_k}^2$.
\end{itemize}
\end{lemma}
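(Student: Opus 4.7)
The plan is to identify two ``good'' types of indices---those for which case (a) already holds, and those for which case (b) holds at the immediately following index---and to prove that their union is infinite by a direct contradiction argument. The monotone enumeration of this union will then be the required subsequence.

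Concretely, I would set
\[
A \bydef \{\, n \in \mathbb N : |g(\U_{\epsilon_n})| \le \epsilon_n^{\,2}\,\}
\]
and
\[
B \bydef \{\, n \in \mathbb N : |g(\U_{\epsilon_n})| > \epsilon_n^{\,2} \;\text{and}\; |g(\U_{\epsilon_n})| \le 5\,|g(\U_{\epsilon_{n+1}})|\,\},
\]
so that by construction every $n \in S \bydef A \cup B$ satisfies one of the two alternatives in the conclusion. Once $S$ is shown to be infinite, its increasing enumeration $S=\{n_1<n_2<\cdots\}$ is the required subsequence.

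The heart of the proof is therefore to show that $S$ is infinite, and I would argue by contradiction. If $S$ were finite, there would exist $N \in \mathbb N$ such that for every $n \ge N$ both defining conditions of $A$ and $B$ fail, namely
\[
|g(\U_{\epsilon_n})| > \epsilon_n^{\,2} = 4^{-n} \quad\text{and}\quad |g(\U_{\epsilon_n})| > 5\,|g(\U_{\epsilon_{n+1}})|.
\]
Iterating the second inequality $j$ times starting from $n = N$, and then using the first at $n = N+j$, gives
\[
|g(\U_{\epsilon_N})| \;>\; 5^{j}\,|g(\U_{\epsilon_{N+j}})| \;>\; 5^{j}\cdot 4^{-(N+j)} \;=\; 4^{-N}\bigl(\tfrac{5}{4}\bigr)^{j}.
\]
Since $\U_{\epsilon_N}$ is contained in $\widehat{\mathbb C}\setminus\overline{\dd^\prec_\beta}$, its reflected image $g(\U_{\epsilon_N})$ sits in the bounded domain $\dd^\prec_\beta$, so the left-hand side is a fixed finite number, whereas the right-hand side tends to $\infty$ as $j\to\infty$. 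This contradiction forces $S$ to be infinite.

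There is no genuine analytic obstacle here---the statement is a purely combinatorial dichotomy whose content is an iteration/pigeonhole argument, and the only property of $g$ actually used is that $g(\U_{\epsilon_n})$ has finite Lebesgue measure. The one small subtlety is that one must combine the two alternatives \emph{simultaneously} into the single set $S$; trying to handle them in two stages (first treat $A$ finite, then fish case-(b) indices out of the tail) runs the same geometric-decay contradiction but is slightly less clean than the formulation above.
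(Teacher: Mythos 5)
Your proposal is correct and follows essentially the same route as the paper: the negation of the claim (your ``$S$ finite'') yields a tail on which $|g(\U_{\epsilon_n})|>\epsilon_n^2$ and $|g(\U_{\epsilon_n})|>5|g(\U_{\epsilon_{n+1}})|$ simultaneously, and iterating produces the lower bound $4^{-N}(5/4)^j\to\infty$, contradicting $|g(\U_{\epsilon_N})|\le|\dd^\prec_\beta|<\infty$. The only cosmetic difference is that you package the indices into the sets $A$ and $B$ before running the identical contradiction argument.
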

\begin{proof}
Assume to the contrary that  the claim is not true, then there exists $n_o\in\mathbb{N}$ such that for every $i\ge n_o$, we have $|g(U_{\epsilon_i})|>\ez^2_i$ and $|g(U_{\ez_i})|>5|g(U_{\ez_{i+1}})|$. Hence we have
\begin{equation}
|g(U_{\ez_{n_o}})|>5|g(U_{\ez_{n_o+1}})|>...>5^n|g(U_{\ez_{n_0+n}})|>....\nonumber
\end{equation}
which implies that for every $n\in\mathbb N$, we have
\begin{equation}\label{eq:987}
|g(U_{\ez_{n_o}})|>\lf(\frac{5}{4}\r)^n4^{-n_o}.
\end{equation}
Letting $n \to \infty$ the term on the right hand side of~\eqref{eq:987} converges to $\fz$  which contradicts with  $|g(U_{\ez_{n_o}})|<|\dd^\prec_\beta|<\fz$.
\end{proof}

\begin{figure}[htbp]
\centering
\includegraphics[width=0.4\textwidth]
{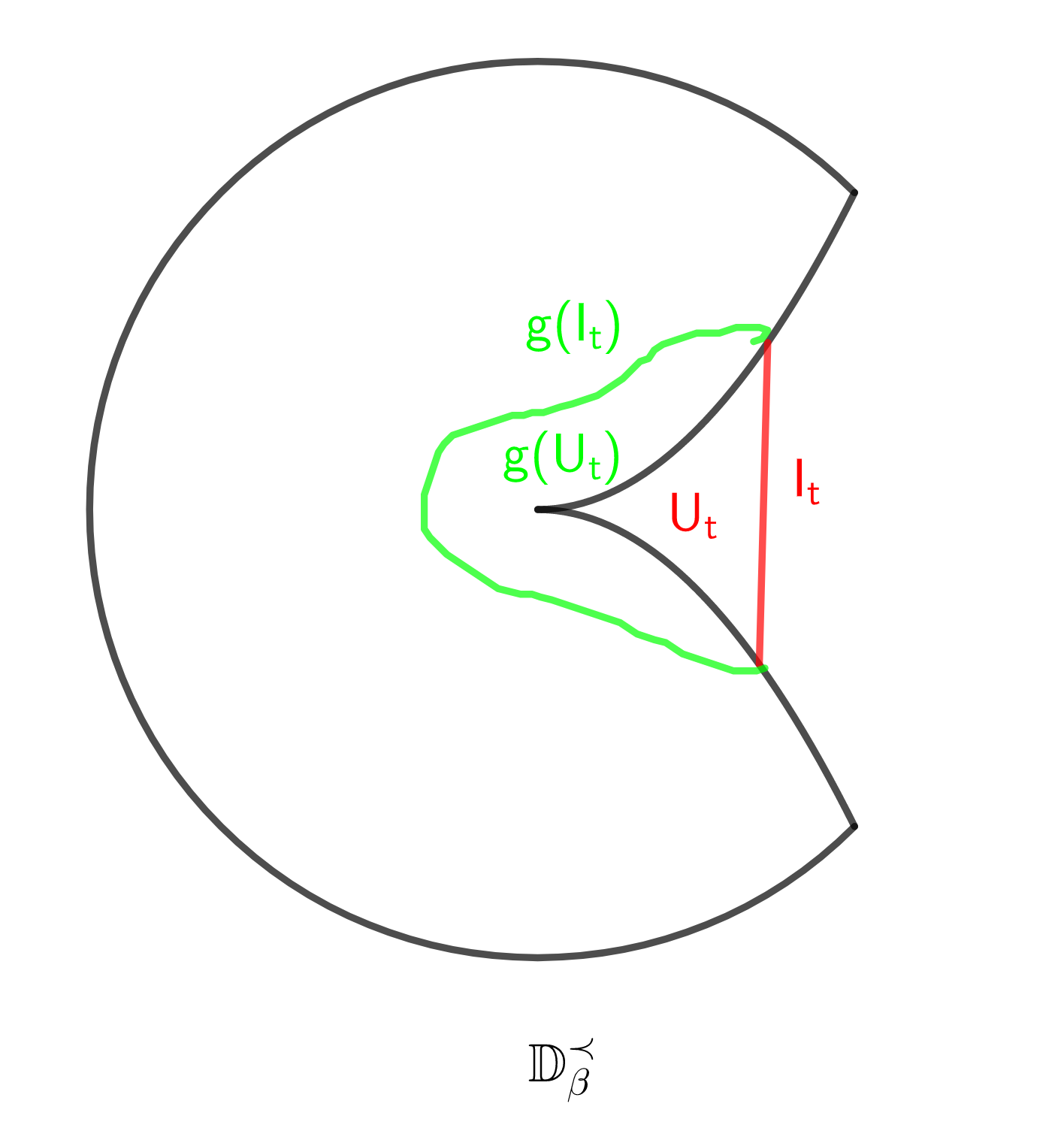}
\end{figure}

The key observation to show that $\dd^\prec_\beta$, $\beta>1$, is not an $\mathscr L^s$-quasidisk for sufficiently large $s>1$ is to compare the length of curves $g(\mathbb I_t)$ and $\mathbb I_t$.

\begin{lemma}\label{lem:key}
Suppose that $\dd^\prec_\beta$ is an $\mathscr L^s$-quasidisk for $1< s<\fz$. Then for almost every $0<t<1$ we have
\begin{equation}\label{eq:ineq1}
\ell (g(\mathbb I_t)) \le \lf(\int_{\mathbb I_t}\frac{|Dg(x)|^s}{|J_g(x)|^{\frac{s-1}{2}}}  \, \dtext x\r)^{\frac{1}{s}}\lf(\int_{\mathbb I_t}|J_g(x)|^{\frac{1}{2}}\, \dtext x\r)^{\frac{s-1}{s}}\, .
\end{equation}
\end{lemma}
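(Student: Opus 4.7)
The plan is to combine the Sobolev regularity of the reflection $g$ supplied by Theorem~\ref{thm:reflection} with an ACL-type length bound on vertical segments, and then to apply H\"older's inequality with a tailor-made splitting of $|Dg|$.

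First I would fix a bounded open set $\U\subset\C$ that contains the closed cusp region $\{x+iy\colon 0\le x\le 1,\ |y|\le x^\beta\}$ (and in particular every segment $\mathbb I_t$ for $0<t<1$), while arranging $h(0)\notin\overline\U$. Since $h(0)$ lies in the interior of $\dd^\prec_\beta$, well away from the cusp tip at the origin, such a $\U$ exists. Theorem~\ref{thm:reflection}, applied with this $\U$ and with $p=s$, then delivers $g\in\W^{1,1}(\U,\C)$.

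Next I would invoke the ACL characterization of $\W^{1,1}$ on vertical lines: for almost every $t\in(0,1)$, the map $y\mapsto g(t+iy)$ is absolutely continuous on $[-t^\beta,t^\beta]$ and its classical derivative coincides almost everywhere with $\partial_y g(t+iy)$. Since $|\partial_y g|\le|Dg|$, for such $t$
\[
\ell(g(\mathbb I_t))\;\le\;\int_{-t^\beta}^{t^\beta}|\partial_y g(t+iy)|\,\dtext y\;\le\;\int_{\mathbb I_t}|Dg(x)|\,\dtext x.
\]
I would then split
\[
|Dg(x)|\;=\;\frac{|Dg(x)|}{|J_g(x)|^{(s-1)/(2s)}}\cdot|J_g(x)|^{(s-1)/(2s)}
\]
and apply H\"older's inequality on $\mathbb I_t$ with conjugate exponents $s$ and $s/(s-1)$: the $s$-th power of the first factor is $|Dg|^s/|J_g|^{(s-1)/2}$ and the $s/(s-1)$-th power of the second is $|J_g|^{1/2}$, which produces exactly~\eqref{eq:ineq1}. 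The splitting makes sense because $g=h\circ\Psi\circ f$ is a composition of homeomorphisms whose Jacobians are nonzero a.e.\ in $\C$ (Lemmas~\ref{lem:intdist}--\ref{lem:inverse}), so $\{J_g=0\}$ has planar Lebesgue measure zero and by Fubini intersects $\mathbb I_t$ in a one-dimensional null set for a.e.\ $t$.

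The principal technical hurdle is the ACL step: I need, for a.e.\ $t$, both that $g$ is absolutely continuous along the specific vertical segment $\mathbb I_t$ and that $\ell(g(\mathbb I_t))$ is dominated by the line integral of $|Dg|$ there. Both assertions are standard consequences of $g\in\W^{1,1}(\U,\C)$ for Sobolev homeomorphisms (using also Lemma~\ref{lem:differen} for a.e.\ differentiability), and once they are in hand the H\"older computation is purely algebraic.
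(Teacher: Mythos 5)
Your proposal is correct and follows essentially the same route as the paper: bound $\ell(g(\mathbb I_t))$ by $\int_{\mathbb I_t}|Dg|$ for a.e.\ $t$ and then apply H\"older's inequality after writing $|Dg|=\frac{|Dg|}{|J_g|^{(s-1)/(2s)}}\cdot|J_g|^{(s-1)/(2s)}$ with exponents $s$ and $s/(s-1)$. The only difference is that you spell out the justifications the paper leaves implicit (the choice of $\U$ with $h(0)\notin\overline\U$ so that Theorem~\ref{thm:reflection} gives $g\in\W^{1,1}(\U,\C)$, and the ACL argument on vertical segments), which is a welcome but not essentially different elaboration.
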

\begin{proof}
The second estimate in~\eqref{eq:ineq1} follows immediately from H\"older's inequality
\[
\begin{split}
\ell (g(\mathbb I_t))&\le \int_{\mathbb I_t}|Dg(x)|\, \dtext x\le\int_{\mathbb I_t}\frac{|Dg(x)|}{|J_g(x)|^{\frac{s-1}{2s}}} \cdot |J_g(x)|^{\frac{s-1}{2s}}\, \dtext x\\
                &\le \lf(\int_{\mathbb I_t}\frac{|Dg(x)|^s}{|J_g(x)|^{\frac{s-1}{2}}} \, \dtext x \r)^{\frac{1}{s}}\lf(\int_{\mathbb I_t}|J_g(x)|^{\frac{1}{2}}dx\r)^{\frac{s-1}{s}}\, .
\end{split}
\]

\end{proof}

Now, we are ready to prove our main result Theorem \ref{thm:main}.
\subsection{The nonexistence part}
Recall that critical power of inward cusps $ \beta_{\textnormal{cr} }$  is given by the formula~\eqref{eq:betac}. Here we prove that if $\beta \ge \beta_{\textnormal{cr}}$, then there is no homeomorphism $f \colon \mathbb C \to \mathbb C$ of finite distortion with  $f(\dd^\prec_\beta)=\dd$ and $K_f\in \mathscr \cl^{\, p}(\mathbb B_R\setminus\overline{\dd^\prec_\beta})\cap\cl^{\, q}(\dd^\prec_\beta)$ for every $R>2$.
For that suppose that there exists such a homeomorphism. Write
\[s \bydef \min \{p,q\} > 1\, . \]
We will split our argument into two parts. According to Lemma~\ref{lem:epsilon} (we denote $\mathcal J =\{n_k \in \mathbb N \colon k \in \mathbb N \}$) there exists a set $\mathcal J \subset \mathbb N$ and a decreasing sequence $\epsilon_j$ such that $\epsilon_j \to 0$ as $j\to \infty$ and for every $j\in \mathcal J$ we have either
\begin{enumerate}[(i)]
\item\label{c1} $\abs{g (\U_{\epsilon_j})} \le \epsilon_j^2$ or
\item\label{c2} $\abs{g(\U_{\epsilon_j})} \le 5 \abs{g(\U_{\epsilon_{j+1}})}$, $\abs{g (\U_{\epsilon_j})} > \epsilon_j^2$  and $\epsilon_j =2 \epsilon_{j+1}$.
\end{enumerate}
We simplify the notation a little bit and write $\mathbb U_{j}=\mathbb U_{\epsilon_j}$.
In both cases we will integrate  the inequality~\eqref{eq:ineq1} with respect to the variable $t$ and then bound the right hand side by the following basic estimate.
\begin{equation}\label{eq:basicrhs}
\begin{split}
& \lf(\int_{\mathbb U_j}\frac{|Dg(x)|^s}{|J_g(x)|^{\frac{s-1}{2}}}  \, \dtext x\r)^{\frac{2}{s}}\lf(\int_{\mathbb U_j}|J_g(x)|^{\frac{1}{2}}\, \dtext x\r)^{\frac{2(s-1)}{s}} \\
& \; \;  \le  \begin{cases}
C_1(\epsilon_j) \,  \abs{\mathbb U_j}^\frac{p-1}{p} \cdot \abs{g(\mathbb U_j)}^\frac{q-1}{q} \quad & \textnormal{when } q\, , p < \infty   \\
C_2(\epsilon_j) \,   \abs{\mathbb U_j} \cdot \abs{g(\mathbb U_j)}^\frac{q-1}{q}   \quad &  \textnormal{when }  p = \infty \\
C_3(\epsilon_j) \,  \abs{\mathbb U_j}^\frac{p-1}{p} \cdot \abs{g(\mathbb U_j)}   \quad & \textnormal{when } q = \infty  \, .
\end{cases}
\end{split}
\end{equation}
Here the functions $C_1(\epsilon_j)$, $C_2(\epsilon_j)$ and $C_3(\epsilon_j)$ converge to $0$ as $j \to \infty$.
\begin{proof}[Proof of~\eqref{eq:basicrhs}]
 Since $f$ is a mapping of $\mathscr L^s$-distortion  and  $h(0)=f^{-1}(0) \not\in \overline{ \mathbb U_{j}}$ applying Theorem~\ref{thm:reflection} we have
\begin{equation}\label{eq:blah2}
\int_{\mathbb U_j}\frac{|Dg(x)|^s}{|J_g(x)|^{\frac{s-1}{2}}}  \, \dtext x \le  \lf(\int_{g(\U_j)}K^s_f(x)\, \dtext x\r)^{\frac{1}{2}}\cdot\lf(\int_{\U_j} K^s_f(x)\, \dtext x\r)^{\frac{1}{2}} \, .
\end{equation}
Especially, Theorem~\ref{thm:reflection} tells us that $g \in \W^{1,1}_{\loc} (\C, \C)$.  Therefore,   Lemma~\ref{lem:differen} and Lemma~\ref{lem:co-area} give
\begin{equation}
\int_{\U_j} \abs{J_g(x)} \, \dtext x \le \abs{g(\U_j)}  \, .
\end{equation}
This together with H\"older's inequality implies
\begin{equation}\label{eq:blah3}
\int_{\U_j} \abs{J_g(x)}^\frac{1}{2} \, \dtext x \le \abs{\U_j}^\frac{1}{2}\abs{g(\U_j)}^\frac{1}{2}  \, .
\end{equation}
Combining~\eqref{eq:blah2} and~\eqref{eq:blah3} we conclude that

\begin{equation}\label{eq:blah5}
\begin{split}
& \lf(\int_{\mathbb U_j}\frac{|Dg(x)|^s}{|J_g(x)|^{\frac{s-1}{2}}}  \, \dtext x\r)^{\frac{2}{s}}\lf(\int_{\mathbb U_j}|J_g(x)|^{\frac{1}{2}}\, \dtext x\r)^{\frac{2(s-1)}{s}} \\
& \; \; \le     \left(  \int_{g(\U_j)}K^s_f(x) \, \dtext x \cdot    \int_{\U_j}K^s_f(x)\,  \dtext   x \right)^\frac{1}{s}  \left( \abs{\U_j}\cdot \abs{g(\U_j)} \right)^\frac{s-1}{s} \, .
\end{split}
\end{equation}
Recall that $1 < s = \min \{p,q\}<\infty$. Now the claimed inequality~\eqref{eq:basicrhs} follows from the estimate~\eqref{eq:blah5} after applying H\"older's inequality  with
\begin{equation}
\begin{split}
C_1 (\epsilon_j) & \bydef ||K_f||_{\mathscr L^p (\mathbb U_j)} ||K_f||_{\mathscr L^q (g(\mathbb U_j))} \\
 C_2 (\epsilon_j) & \bydef ||K_f||_{\mathscr L^\infty (\mathbb U_j)} ||K_f||_{\mathscr L^q (g(\mathbb U_j))} \\
 C_3 (\epsilon_j) & \bydef ||K_f||_{\mathscr L^p (\mathbb U_j)} ||K_f||_{\mathscr L^\infty (g(\mathbb U_j))} \, .
\end{split}
\end{equation}

\end{proof}

\subsubsection{Case~\eqref{c1}} Recall that in this case we assume that $\abs{g (\U_{j})} \le \epsilon_j^2$.  The homeomorphism $f$ is a mapping of $\mathscr L^s$-distortion,  Lemma~\ref{lem:key} implies that for almost every $0<t<1$ we have
\begin{equation}\ell (g(\mathbb I_t)) \le  \lf(\int_{\mathbb I_t}\frac{|Dg(x)|^s}{|J_g(x)|^{\frac{s-1}{2}}}  \, \dtext x\r)^{\frac{1}{s}}\lf(\int_{\mathbb I_t}|J_g(x)|^{\frac{1}{2}}\, \dtext x\r)^{\frac{s-1}{s}}\, .\end{equation}
Since the curve $g(\mathbb I_t)$ connects the points $(t,t^\beta)$ and $(t,-t^\beta)$ staying in $\dd^\prec_\beta$, the  length of $g(\mathbb I_t)$ is at least $2t$. Therefore,
\begin{equation} 2t \le  \lf(\int_{\mathbb I_t}\frac{|Dg(x)|^s}{|J_g(x)|^{\frac{s-1}{2}}}  \, \dtext x\r)^{\frac{1}{s}}\lf(\int_{\mathbb I_t}|J_g(x)|^{\frac{1}{2}}\, \dtext x\r)^{\frac{s-1}{s}}\, .\end{equation}
Integrating this estimate from $0$ to $\epsilon_j$ with respect to the variable $t$ and applying H\"older's inequality we obtain
\begin{equation}\label{eq:blah1}
\epsilon_j^2 \le    \lf(\int_{\mathbb U_j}\frac{|Dg(x)|^s}{|J_g(x)|^{\frac{s-1}{2}}}  \, \dtext x\r)^{\frac{1}{s}}\lf(\int_{\mathbb U_j}|J_g(x)|^{\frac{1}{2}}\, \dtext x\r)^{\frac{s-1}{s}}\, .
\end{equation}
After squaring this and applying the basic estimate~\eqref{eq:basicrhs} we conclude that
\[\epsilon_j^4 \le \begin{cases}
C_1(\epsilon_j) \,  \abs{\mathbb U_j}^\frac{p-1}{p} \cdot \abs{g(\mathbb U_j)}^\frac{q-1}{q} \quad & \textnormal{when } q\, , p < \infty   \\
C_2(\epsilon_j) \,   \abs{\mathbb U_j} \cdot \abs{g(\mathbb U_j)}^\frac{q-1}{q}   \quad &  \textnormal{when }  p = \infty \\
C_3(\epsilon_j) \,  \abs{\mathbb U_j}^\frac{p-1}{p} \cdot \abs{g(\mathbb U_j)}   \quad & \textnormal{when } q = \infty  \, .
\end{cases}\]
 Now, since $\abs{\U_j} = \frac{2 \epsilon_j^{\beta +1}}{\beta +1} \le \epsilon_j^{\beta +1}$ and $\abs{g(\mathbb U_j)} \le \epsilon_j^2$ we have
\[1 \le \begin{cases}
C_1(\epsilon_j) \,  \epsilon_j^\frac{(\beta- \beta_{\textnormal{cr}} )(pq-q)}{pq} \quad & \textnormal{when } q\, , p < \infty   \\
C_2(\epsilon_j) \,  \epsilon_j^{\beta- \beta_{\textnormal{cr}}}   \quad &  \textnormal{when }  p = \infty \\
C_3(\epsilon_j) \,  \epsilon_j^\frac{(\beta-\beta_{\textnormal{cr}})(p-1)}{p}  \quad & \textnormal{when } q = \infty  \, .
\end{cases}\]
Note that  $C_1 (\epsilon_j)$, $C_2 (\epsilon_j)$ and  $C_3 (\epsilon_j)$ converge to $0$ as $j \to \infty$. Therefore, $\beta < \beta_{\textnormal{cr}}$, this finishes the proof of Theorem~\ref{thm:main} in Case~\eqref{c1}.
\subsubsection{Case~\eqref{c2}} As in the previous case applying  Lemma~\ref{lem:key} for almost every $0<t<1$ we have
\begin{equation}\ell (g(\mathbb I_t)) \le  \lf(\int_{\mathbb I_t}\frac{|Dg(x)|^s}{|J_g(x)|^{\frac{s-1}{2}}}  \, \dtext x\r)^{\frac{1}{s}}\lf(\int_{\mathbb I_t}|J_g(x)|^{\frac{1}{2}}\, \dtext x\r)^{\frac{s-1}{s}}\, .\end{equation}
Now,  we first note that $ 2 \, \ell (g(\mathbb I_t)) \ge  \ell\big(  \partial g(\mathbb U_t) \big)$ and then apply the isoperimetric inequality, Lemma~\ref{lem:isoperimetric} we get
\begin{equation}\abs {g(\mathbb U_t)}^\frac{1}{2} \le  \lf(\int_{\mathbb I_t}\frac{|Dg(x)|^s}{|J_g(x)|^{\frac{s-1}{2}}}  \, \dtext x\r)^{\frac{1}{s}}\lf(\int_{\mathbb I_t}|J_g(x)|^{\frac{1}{2}}\, \dtext x\r)^{\frac{s-1}{s}}\, .\end{equation}
Integrating from $\epsilon_{j+1}$ to $\epsilon_j$ with respect to $t$ we obtain
\[(\epsilon_j -\epsilon_{j+1}) \abs {g(\mathbb U_{j+1})}^\frac{1}{2}  \le  \lf(\int_{\mathbb U_j}\frac{|Dg(x)|^s}{|J_g(x)|^{\frac{s-1}{2}}}  \, \dtext x\r)^{\frac{1}{s}}\lf(\int_{\mathbb U_j}|J_g(x)|^{\frac{1}{2}}\, \dtext x\r)^{\frac{s-1}{s}}\, .
\]
Since by the assumptions of Case~\eqref{c2}$,  \abs{g(\U_{j})} \le 5 \abs{g(\U_{{j+1}})}$ and $\epsilon_j =2 \epsilon_{j+1}$ we have
\[\epsilon_j \abs {g(\mathbb U_{j})}^\frac{1}{2}  \le  10 \lf(\int_{\mathbb U_j}\frac{|Dg(x)|^s}{|J_g(x)|^{\frac{s-1}{2}}}  \, \dtext x\r)^{\frac{1}{s}}\lf(\int_{\mathbb U_j}|J_g(x)|^{\frac{1}{2}}\, \dtext x\r)^{\frac{s-1}{s}}\, .
\]
Combining this with~\eqref{eq:basicrhs} we obtain
\[
\epsilon_j^2 \abs {g(\mathbb U_{j})} \le 100 \cdot  \begin{cases}
C_1(\epsilon_j) \,  \abs{\mathbb U_j}^\frac{p-1}{p} \cdot \abs{g(\mathbb U_j)}^\frac{q-1}{q} \quad & \textnormal{when } q\, , p < \infty   \\
C_2(\epsilon_j) \,   \abs{\mathbb U_j} \cdot \abs{g(\mathbb U_j)}^\frac{q-1}{q}   \quad &  \textnormal{when }  p = \infty \\
C_3(\epsilon_j) \,  \abs{\mathbb U_j}^\frac{p-1}{p} \cdot \abs{g(\mathbb U_j)}   \quad & \textnormal{when } q = \infty  \, .
\end{cases}
\]
Therefore,
\[
\epsilon_j^2  \le 100 \cdot  \begin{cases}
C_1(\epsilon_j) \,  \abs{\mathbb U_j}^\frac{p-1}{p} \cdot \abs{g(\mathbb U_j)}^{-\frac{1}{q}} \quad & \textnormal{when } q\, , p < \infty   \\
C_2(\epsilon_j) \,   \abs{\mathbb U_j} \cdot \abs{g(\mathbb U_j)}^{-\frac{1}{q}}   \quad &  \textnormal{when }  p = \infty \\
C_3(\epsilon_j) \,  \abs{\mathbb U_j}^\frac{p-1}{p}    \quad & \textnormal{when } q = \infty  \, .
\end{cases}
\]

This time  $\abs{\U_j} = \frac{2 \epsilon_j^{\beta +1}}{\beta +1} \le \epsilon_j^{\beta +1}$ and $\abs{g(\mathbb U_j)} > \epsilon_j^2$. Therefore,
\[1 \le 100 \cdot \begin{cases}
C_1(\epsilon_j) \,  \epsilon_j^\frac{(\beta- \beta_{\textnormal{cr}})(pq-q)}{pq} \quad & \textnormal{when } q\, , p < \infty   \\
C_2(\epsilon_j) \,  \epsilon_j^{\beta- \beta_{\textnormal{cr}}}   \quad &  \textnormal{when }  p = \infty \\
C_3(\epsilon_j) \,  \epsilon_j^\frac{(\beta-\beta_{\textnormal{cr}})(p-1)}{p}  \quad & \textnormal{when } q = \infty  \, .
\end{cases}\]
Therefore $\beta < \beta_{\textnormal{cr}}$. This finishes the proof of nonexistence part of Therorem~\ref{thm:main}.

\subsection{The existence part}
In this section, we construct a homeomorphism of finite  distortion $f \colon \C\to\C$ with $f(\dd^\prec_\beta)=\dd$ and $K_f\in \cl^{\, p}(\mathbb B_R\setminus\overline{\dd^\prec_\beta})\cap\cl^{\, q}(\dd^\prec_\beta)$ for every $R>2$, whenever $1\le \beta<\beta_{\textnormal{cr}}$.  Simplifying the  construction we will  replace the unit disk $\mathbb D$ by $\dd^\prec_1$. This causes no loss of generality because  $\dd^\prec_1$ is Lipschitz regular. Indeed, for every Lipschitz domain $\Omega$ there exists a global bi-Lipschitz change of  variables $\Phi \colon \mathbb C \to \mathbb C$ for which $\Phi (\Omega)$ is the unit disk. Therefore, the domains $\dd^\prec_1$ and $\mathbb D$ are bi-Lipschitz equivalent. Especially,  $\dd^\prec_1$ is a quasidisk. Hence we may also  assume the strict inequality  $1<\beta<\beta_{\textnormal{cr}}$ in the construction.

In addition to these we will construct a self-homeomorphism of the unit disk onto itself which  coincide with identity on the boundary. Note that  this causes no loss of generality since $1\pm i \in \dd^\prec_\beta$ and therefore extending the constructed homeomorphism as the identity map to the complement of unit disk. In summary, it suffices to construct a homeomorphism $f \colon \mathbb D \onto \mathbb D$, $f(z)=z$ on $\partial \mathbb D$,  $f(\dd^\prec_\beta)=\dd^\prec_1$ and  $K_f\in\cl^{\, p}(\mathbb D\setminus\overline{\dd^\prec_\beta})\cap\cl^{\, q}(\dd^\prec_\beta)$.  We will  use the polar coordinates $(r,\theta)$ and write  $f \colon \dd\to\dd$ in the form $f(r, \theta)=(\tilde r(r), \tilde\theta(\theta, r))$. Here $\tilde r:[0, 1]\onto[0,1]$ is a strictly increasing function defined by
\begin{equation}\label{eq:rtilde}
\tilde r(r)  \bydef   \begin{cases}
\frac{e}{\exp\lf(\lf(\frac{1}{r}\r)^{\gamma_\beta}\r)} \quad& \textnormal{when } q < \infty   \\
r    \quad & \textnormal{when } q = \infty  \, .
\end{cases}
\end{equation}
The value $\gamma_\beta$ is chosen so that
\begin{equation}
\begin{cases}
\max\lf\{\frac{\beta(p-1)-(p+1)}{p}, 0\r\}<\gamma_\beta<\frac{2}{q} & \textnormal{ when } p<\infty \\
\gamma_\beta=\beta-1  & \textnormal{ when } p=\infty \, .
\end{cases}
\end{equation}
 For every $0<r<1$ we choose $a_r, b_r\in S(0, r)\cap\D\dd^\prec_\beta$ such that $\im a_r>0$ and $\im b_r <0$.  Here and what follows we write $S(0,r)= \partial \mathbb D (0,r)$. Respectively, we choose $\tilde a_{\tilde r(r)}, \tilde b_{\tilde r(r)}\in S(0, \tilde r(r))\cap\D\dd^\prec_1$ such that $\im \tilde a_{\tilde r(r)}>0$ and $\im \tilde b_{\tilde r(r)}<0$. We define the argument function $\tilde\theta(r, \theta)$ so that it  satisfies  the following three properties
\begin{itemize}
\item[(1)]  $f(a_r)=\tilde a_{\tilde r(r)}$ and $f(b_r)=\tilde b_{\tilde r(r)}$.
\item[(2)]  $f$ maps the circular arc $S(0, r)\cap\dd^\prec_\beta$ onto the circular arc $S(0, \tilde r(r))\cap\dd^\prec_1$ linearly as a function of $\theta$.
\item[(3)]   $f$ maps the circular arc $S(0, r)\cap\lf(\dd\setminus\overline{\dd^\prec_\beta}\r)$ onto the circular arc $S(0, \tilde r(r))\cap\lf(\dd\setminus\overline{\dd^\prec_1}\r)$ linearly as a function of $\theta$.
\end{itemize}

\begin{figure}[htbp]
\centering
\includegraphics[width=0.9\textwidth]
{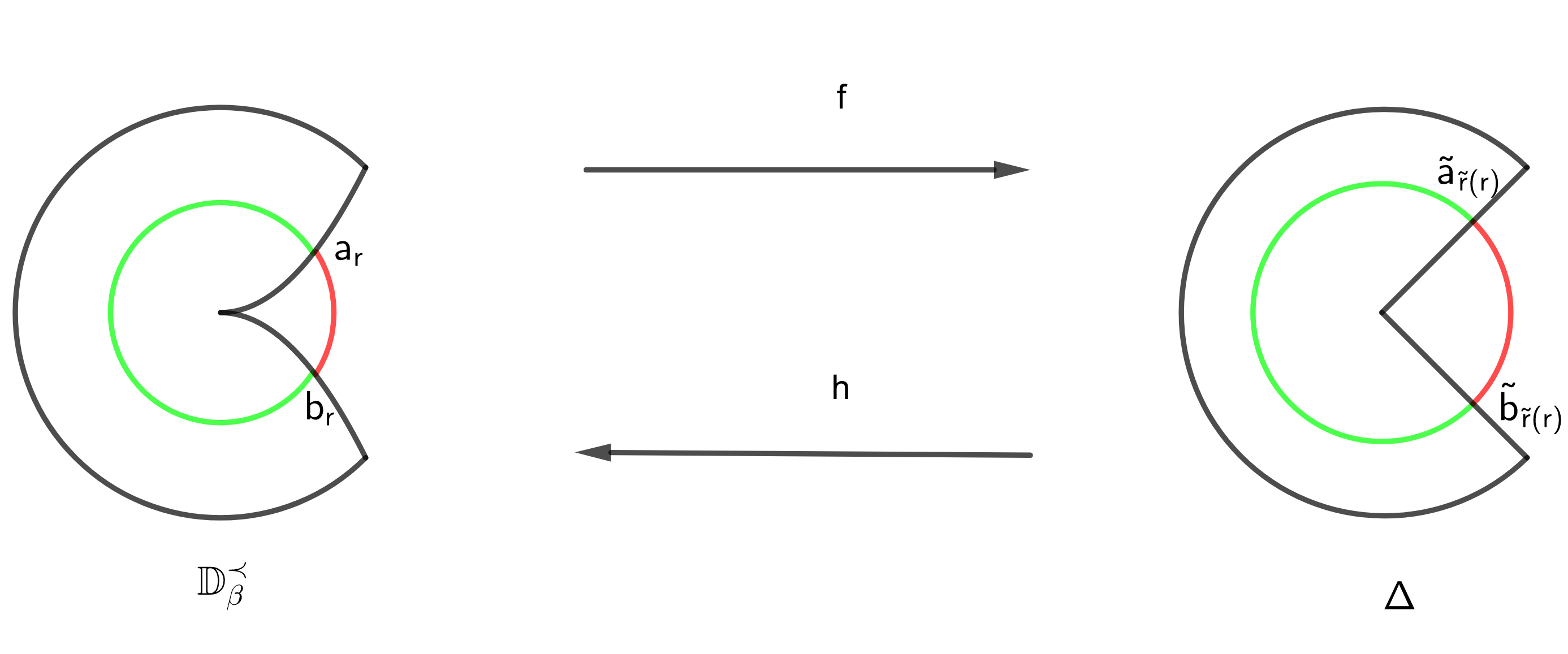}
\end{figure}
We have
\begin{equation}
\mathbb D\,  \cap \,  \dd^\prec_\beta =  \lf\{(r, \theta)\in\C \colon  0<r<1 \; \textnormal{ and } \;  \arctan t^{\beta-1}<\theta<2\pi-\arctan t^{\beta-1}\r\}\nonumber
\end{equation}
and
\begin{equation}
\dd \, \setminus \, \overline{\dd^\prec_\beta}=\lf\{(r, \theta)\in\C \colon  0<r<1 \; \textnormal{ and } \;  -\arctan t^{\beta-1}<\theta<\arctan t^{\beta-1}\r\}.\nonumber
\end{equation}
Here $t>0$ and solves  the equation $t^2+t^{2\beta}=r^2$.  We also have
\begin{equation}
\mathbb D\,  \cap \, \dd^\prec_1 =\lf\{(\tilde r, \tilde \theta)\in\C \colon  0<\tilde r<1 \; \textnormal{ and } \;  \frac{\pi}{4}<\tilde\theta<\frac{7\pi}{4}\r\}\nonumber
\end{equation}
and
\begin{equation}
\dd\setminus\overline{\dd^\prec_1}=\lf\{(\tilde r, \tilde\theta)\in\C \colon 0<\tilde r<1 \; \textnormal{ and } \;  \frac{- \pi}{4}<\tilde\theta<\frac{\pi}{4}\r\}.\nonumber
\end{equation}
Using the polar coordinates  we have
\[
\tilde \theta(\theta, r)  =  \begin{cases}
\frac{3\pi\theta}{4\lf(\pi-\arctan t^{\beta-1}\r)}+\lf(\frac{\pi}{4}-\frac{3\pi\arctan t^{\beta-1}}{4(\pi-\arctan t^{\beta-1})}\r) \quad& \textnormal{when } (r, \theta)\in\dd^\prec_\beta   \\
\frac{\pi\theta}{4\arctan t^{\beta-1}}    \quad & \textnormal{when }  (r, \theta)\in\dd\setminus\overline{\dd^\prec_\beta}.
\end{cases}
\]
For  $(r, \theta)\in\dd$, the differential matrix of $f$ reads as
\begin{equation}
D{f}(r,\theta)
=
\left(
 \begin{array}{ccc}
\frac{\D}{\D r}\tilde r(r) &~~ 0 \\
\tilde r(r)\frac{\D}{\D r}\tilde\theta(r, \theta) &~~ \frac{\tilde r(r)}{r}\frac{\D}{\D\theta}\tilde\theta(r, \theta)\\
\end{array}
\right).\nonumber
\end{equation}
Computing the derivative of radial part $\tilde r (r)$ we have
\begin{equation}\label{eq:diffradial}
\frac{\partial}{\partial r}\tilde r(r)  =  \begin{cases}
\gamma_\beta\lf(\frac{1}{r}\r)^{\gamma_\beta+1}\tilde r(r) \quad& \textnormal{when } q < \infty   \\
1    \quad & \textnormal{when } q = \infty  \, .
\end{cases}
\end{equation}
\subsubsection{Proof of $K_f \in \cl^{\, q}(\dd^\prec_\beta)$}

For $(r, \theta)\in\dd^\prec_\beta$, we have
\begin{center}
$\tilde r(r)\frac{\D}{\D r}\tilde\theta(r, \theta)=\tilde r(r)\frac{\D}{\D r}\lf[\frac{3\pi\theta}{4\lf(\pi-\arctan t^{\beta-1}\r)}+\lf(\pi-\frac{3\pi^2}{4\lf(\pi-\arctan t^{\beta-1}\r)}\r)\r]$
\end{center}
and
\begin{center}
$\frac{\tilde r(r)}{r}\frac{\D}{\D\theta}\tilde\theta(r, \theta)=\frac{\tilde r(r)}{r}\frac{3\pi}{4\lf(\pi-\arctan t^{\beta-1}\r)}\, $.
\end{center}
Since $t>0$ solves the equation $t^2+t^{2\beta}=r^2$, for  $0<r<1$, we have $\frac{\D t}{\D r}\approx 1$ and $0<\arctan t^{\beta-1}<\frac{\pi}{4}$. Here and what follows the notation $A \approx B$ is a shorter form of  two inequalities $A \le c B$ and $B \le c A$ for some positive constant $c$. Therefore, there exists a constant $C>1$ independent of $r$ and $\theta$, such that
\begin{equation}
\abs{\tilde r(r)\frac{\partial}{\partial r}\tilde\theta(r, \theta)}  \le  C \cdot \begin{cases}
\lf(\frac{1}{r}\r)^{\gamma_\beta+1}\tilde r(r) \quad& \textnormal{when } q < \infty   \\
1    \quad & \textnormal{when } q = \infty  \, .
\end{cases}\nonumber
\end{equation}
and
\begin{equation}
\frac{\tilde r(r)}{r}\frac{\partial}{\partial\theta}\tilde\theta(r, \theta)  \approx   \begin{cases}
\frac{\tilde r(r)}{r} \quad& \textnormal{when } q < \infty   \\
1    \quad & \textnormal{when } q = \infty  \, .
\end{cases}\nonumber
\end{equation}
Now, we have
\begin{equation}
K_f(r, \theta) \le C \cdot    \begin{cases}
{r^{-\gamma_\beta}} \quad& \textnormal{when } q < \infty   \\
1   \quad & \textnormal{when } q = \infty  \, .
\end{cases}\nonumber
\end{equation}
for some constant $C>0$.

Since $\gamma_\beta$ is chosen so that $0<\gamma_\beta<\frac{2}{q}$ for $q<\fz$, we have $K_f\in\cl^{\, q}(\dd^\prec_\beta)$.  Also if $q= \infty$, then the distortion function $K_f\in\cl^{\infty}(\dd^\prec_\beta)$, as claimed.
\subsubsection{Proof of  $K_f\in\cl^{\, p}(\dd\setminus\overline{\dd^\prec_\beta})$}

For $(r, \theta)\in\dd\setminus\overline{\dd^\prec_\beta}$, we have
\begin{center}
$\tilde r(r)\frac{\D}{\D r}\tilde\theta(r, \theta)=\tilde r(r)\frac{\D}{\D r}\lf(\frac{\pi\theta}{4\arctan t^{\beta-1}}\r)$
\end{center}
and
\begin{center}
$\frac{\tilde r(r)}{r}\frac{\D}{\D\theta}\tilde\theta(r, \theta)=\frac{\tilde r(r)}{r}\frac{\pi}{4\arctan t^{\beta-1}}$.
\end{center}
Recall that since $t>0$ solves the equation $t^2+t^{2\beta}=r^2$, for $0<r<1$, we have $\frac{\D t}{\D r}\approx 1$. In this case, $-\arctan t^{\beta-1}<\theta<\arctan t^{\beta-1}$, therefore there exists a constant $C>0$ such that
\begin{center}
$\abs{\tilde r(r)\frac{\D}{\D r}\tilde\theta(r, \theta)}\le C\lf(\frac{1}{r}\r)^{\gamma_\beta+1}\tilde r(r)$.
\end{center}
Since
\begin{center}
$\lim\limits_{t\to0^+}\frac{\arctan t^{\beta-1}}{t^{\beta-1}}=1$ and $t<r<2t$,
\end{center}
we have
\begin{center}
$  \frac{\pi}{4\arctan t^{\beta-1}}\frac{\tilde r(r)}{r}\approx  \frac{\tilde r(r)}{r^\beta}$.
\end{center}
Therefore,
\[
K_f(r, \theta) \le\frac{C}{r^{\abs{\beta-\gamma_\beta-1}}} \qquad \textnormal{when } (r, \theta)\in\dd\setminus\overline{\dd^\prec_\beta}
\]
For $p=\infty$, since $\gamma_\beta=\beta-1$, we have $K_f\in\cl^{\, \fz}(\dd\setminus\dd^\prec_\beta)$. For $p<\fz$, $\beta$ is chosen so that $1<\beta<\beta_{\textnormal{cr}}$. When $q<\fz$, $\gamma_\beta$ is chosen so that
\begin{equation}
\max\lf\{\frac{\beta(p-1)-(p+1)}{p}, 0\r\}<\gamma_\beta<\frac{2}{q},\nonumber
\end{equation}
and when $q=\fz$, $\gamma_\beta$ is set to be $0$. Since $\abs{\gamma_\beta+1-\beta}<\frac{2}{p}$ we have
\begin{equation}
\int_{\dd\setminus\overline{\dd^\prec_\beta}}K^p_f(x)\, \dtext x\le \int_0^{2\pi}\int_0^1\frac{1}{r^{p\abs{\beta-\gamma_\beta-1}-1}}\, \dtext r\, \dtext \theta<\fz.\nonumber
\end{equation}

\end{document}